\documentclass{article}

\usepackage[english]{babel}
\usepackage[letterpaper,top=2cm,bottom=2cm,left=3cm,right=3cm,marginparwidth=1.75cm]{geometry}

\usepackage{amsmath}
\usepackage{amsthm}
\usepackage{amsfonts}
\usepackage{graphicx}
\usepackage{amssymb}
\usepackage{bbm}
\usepackage[colorlinks=true, allcolors=blue]{hyperref}
\usepackage[style=apa, backend=biber]{biblatex}
\usepackage{comment}

\newtheorem{theorem}{Theorem}
\numberwithin{theorem}{section}
\newtheorem{lemma}[theorem]{Lemma}
\newtheorem{proposition}[theorem]{Proposition}
\newtheorem{remark}[theorem]{Remark}
\newtheorem{corollary}[theorem]{Corollary}

\newcommand{\N}{{\mathbb{N}}}
\newcommand{\Z}{{\mathbb{Z}}}
\newcommand{\PP}{{\mathbb{P}}}

\newcommand{\G}{{\mathcal{G}}}
\newcommand{\M}{{\mathcal{M}}}

\newcommand{\x}{\xi^{\ast}}
\numberwithin{equation}{section}

\title{\textbf{Phase transitions for percolation of words in one dimension}}
\author{Gustavo O. de Carvalho\thanks{
Departamento de Estatística,
Universidade Estadual de Campinas, Brazil. \texttt{godc@ime.unicamp.br}
} \and Pablo A. Gomes\thanks{
Departamento de Estatística,
Universidade de São Paulo, Brazil. \texttt{pagomes@usp.br}
}}
\date{}

\begin{document}
\maketitle
\textit{}
\begin{abstract}
   In this paper, we investigate models of percolation of words on $\mathbb{Z}_+$ with long-range connections. The underlying graph is oriented and constructed according to a sequence of non-negative ranges. In the first model, the sequence of ranges is considered random and, in the second model, deterministic and non-decreasing. In both cases, we establish a phase transition for the occurrence of percolation of all words simultaneously from the origin.
\end{abstract}

\noindent\textit{Keywords: percolation of words, long-range graphs.}

\medskip

\noindent\textit{2020 Mathematics Subject Classification.
60K35; 82B43; 60K37.}

\section{Introduction}

\subsection{Preliminaries and Motivation}

To introduce a general framework of percolation of words, we start with some definitions. The alphabet consists of two letters: $0$ and $1$. A word is a semi-infinite sequence of letters of the alphabet $\xi = (\xi_1, \xi_2, \dots) \in \{0,1\}^{\mathbb{N}}$. The set of all words is denoted by \[\Xi = \{0,1\}^{\mathbb{N}}.\]

The model of percolation of words is defined on an underlying graph. In general, given a graph $G=(V,E)$, a letter is independently assigned to each vertex $v \in V$. Given a parameter $p \in (0,1)$, let $(X_v)_{v\in V}$ be a family of independent Bernoulli random variables with probability $p$ of success. For each $v \in V$, $X_v$ denotes the letter assigned to $v$.

We say that a word $\xi \in \Xi$ is seen from a vertex $v_0 \in V$ if there exists a sequence of distinct vertices $v_1,v_2,\dots$ of $V$ such that $X_{v_i} = \xi_i$ and $(v_{i-1}, v_{i}) \in E$ for every $i = 1, 2, \dots$. The set of words seen from a set of vertices $S \subset V$ is defined by
\[
W_S = \{ \xi \in \Xi ~:~ \xi \textrm{ is seen from some }v \in S \}.
\]
For simplicity, when $S = \{v\}$, we denote $W_S$ by $W_v$.

Our main object of interest is the probability that all words are seen simultaneously from the origin. According to our notation, this event is denoted by 
\(\{W_0 = \Xi\}\).

In this paper, we focus on one-dimensional underlying graphs. See also Section~\ref{relatedworks} for references to higher-dimensional models. We proceed by defining the 
long-range graph $\mathcal{G} = (\mathbb{Z_+, \mathcal{E}})$ considered in our models. The set of oriented edges  $\mathcal{E}$ is constructed according to the sequence of ranges 
\[
\mathcal{M}=(M_n)_{n\in \mathbb{Z}_+}.
\] 
Given a vertex $n \in \mathbb{Z}_+$, the set of oriented edges starting at $n$ is given by 
\begin{equation}\label{eq:En}
    \mathcal{E}_n = \{(n,n+i) ~:~ 0 <  i \leq M_n\}.
\end{equation}
To conclude the definition of $\mathcal{G}$, we set $\mathcal{E} = \cup_{n \in \mathbb{Z}_+} \mathcal{E}_n$.

Having provided a brief description above, we now discuss the motivations for this work. In \cite{grimmet_liggett_richthammer_2010}, the problem of percolation of words is studied on the same graph with constant range, that is, $M_n=M<\infty$ for every $n\in\mathbb{Z}_+$. The authors mainly investigate the probability of observing finite words in $\G$. One motivation for studying finite words is that any periodic infinite word $\xi\in\Xi$ is seen in $\G$ with probability zero, since $(X_n)_{n\in\mathbb{Z}_+}$ almost surely contains arbitrarily long consecutive runs of both $0$'s and $1$'s.

Although it is not possible to observe all words when $M_n=M<\infty$ for every $n\in\mathbb{Z}_+$, \cite{basu_sly_2014} showed that \textit{almost all words} are seen somewhere. More precisely, let $\eta\in\Xi$ be a random word generated by the measure $\mu_q$ under which \((\xi_n)_{n\in\mathbb{Z}_+}\) is an independent sequence of Bernoulli$(q)$ random variables independent of $(X_n)_{n\in\mathbb{Z}_+}$. They proved that there exists $M$ large enough such that the event $\{\eta\in\Xi:P(\eta\in W_{\mathbb{Z}_+})=1\}$ occurs $\mu_q$-a.s. for $p=q=1/2$. By \cite[Theorem 13]{grimmet_liggett_richthammer_2010}, the same conclusion can be extended for any $(p,q)\in(0,1)^2$.

In this context, it is natural to ask how this behavior changes when considering long-range graphs with edges of unbounded length. Our main goal is to find conditions on the long-range graph $\mathcal{G}$ under which all words (not just almost all words) are seen. Note by the discussion above that this requires the sequence $(M_n)_{n\in\mathbb{Z}_+}$ to be unbounded. Accordingly, we consider two different settings: random i.i.d. ranges and deterministic non-decreasing ranges. Precise definitions and results are  given in the next section.

\subsection{Models and Results}

\bigskip

\noindent {\textbf{ \large I.i.d. ranges}}
\medskip

Fix a non-negative integer-valued random variable $R$. We assume that $\mathcal{M}=(M_n)_{n\in \mathbb{Z}_+}$ is a sequence of independent random variables with the same distribution as $R$. 

In this case, we are considering the model of percolation of words on a random graph, thus there are two layers of randomness: one with respect to the ranges $\mathcal{M}=(M_n)_{n\in \mathbb{Z}_+}$ and the other with respect to the letters $(X_n)_{n \in \mathbb{Z}_+}$ assigned to the vertices. The law with respect to the ranges is denoted by $P$. Given a realization of the random graph $\G$, we denote by $P_{\G}$ the quenched law. The natural annealed law is considered and denoted by $\PP$. Given two measurable sets $A$ and $B$ in the standard $\sigma-$algebra generated by cylinders with respect to $(X_n)_{n \in \mathbb{Z}_+}$ and $(M_n)_{n\in \mathbb{Z}_+}$ respectively, we have
\[
\PP (A \times B) = \int_{B}P_{\G}(A)dP(\G).
\]
With some abuse of notation, $\PP(A)$ denotes $\PP(A\times B )$, where $B$ is any measurable set such that $P(B) = 1$.

Regarding the tail distribution of $R$, we establish the following phase transition.
\begin{theorem}\label{theo:random_m} Given the parameter $p \in (0,1)$ and the distribution of $R$, the model of percolation of words on the random graph $\G$ is such that
\begin{enumerate}
\item If \( \displaystyle \limsup_{n\to\infty}nP(R\geq n)< \frac{1}{\min\{p,1-p\}},\) then \(\PP(W_0 = \Xi) = 0,\)

 \item   If
\( \displaystyle \liminf_{n\to\infty}nP(R\geq n)>\frac{3}{\min\{p,1-p\}},
\)
then
\(\PP(W_0 = \Xi) > 0.\)
\end{enumerate}
\end{theorem}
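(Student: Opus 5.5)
We may assume $p\le 1/2$, so that $1$ is the rarer letter and $\min\{p,1-p\}=p$; the other case follows by exchanging the letters. The heuristic behind both thresholds is the same. Suppose $P(R\ge n)\approx c/n$. Then a $1$-vertex $n<x$ has an edge jumping over a given interval $[x,x+\ell]$ with probability about $c/(x+\ell-n)$. Only $1$-vertices matter when we follow the word $\mathbf 1=(1,1,\dots)$, and these have density $p$. So the probability that no $1$-vertex before $x$ jumps over the interval is roughly
\[
\prod_{j=\ell}^{x+\ell}\Bigl(1-\tfrac{c}{j}\Bigr)^{p}\approx \Bigl(\tfrac{\ell}{x+\ell}\Bigr)^{pc}.
\]
Hence the exponent $pc$ compared with $1$ governs whether the rare letter can be chained indefinitely.

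\textbf{Part 1.} I will show that $\PP(\mathbf 1\in W_0)=0$, which already gives $\PP(W_0=\Xi)=0$. Fix $c$ with $\limsup nP(R\ge n)<c<1/p$. Call a position $x$ \emph{blocking at scale $\ell$} if the following three conditions hold:
\begin{itemize}
\item $X_{x}=\dots=X_{x+\ell}=0$;
\item no vertex $n<x$ with $X_n=1$ has $n+M_n\ge x+\ell$;
\item the origin's range is also crossed by no edge beyond $x+\ell$.
\end{itemize}
If some $x$ is blocking, then every path seeing a prefix of $\mathbf 1$ is confined to $[0,x)$. Since a path must use distinct vertices, $\mathbf 1$ is then not seen.

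Next I estimate the probability of blocking, conditioning first on the letters and then averaging over the ranges. Since $p\,c<1$, the computation above gives
\[
\PP(x \text{ blocking})\gtrsim (1-p)^{\ell}\,(\ell/x)^{pc}.
\]
Take $\ell=\ell(x)=\varepsilon\log x$ with $\varepsilon$ small, and look at $x$ ranging over a sparse family of about $x^{1-\delta}$ well-separated positions in a dyadic window $[2^k,2^{k+1})$. The expected number of blocking positions in the window then tends to infinity, polynomially in $2^k$. A second-moment bound gives that with probability tending to one some blocking position exists, so $\PP(\mathbf 1\in W_0)=0$. The second moment needs the blocking events at separated positions to be nearly independent. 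Their zero-blocks are disjoint, and the range conditions are monotone decreasing in $\mathcal M$. I therefore expect to use FKG/Harris for the range part, or to bound the covariance directly using the product structure. Uniformity in the finitely many small $n$ (where $nP(R\ge n)$ is uncontrolled) only costs a constant factor.

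\textbf{Part 2.} Fix $c$ with $\liminf nP(R\ge n)>c>3/p$. I will build a multiscale tree of \emph{strong} vertices. Take scales $x_k=\lambda^k$. A vertex $n\in[x_k,x_{k+1})$ is \emph{strong of level $k$} if $M_n\ge x_{k+2}$, so that its edges cover all of $[x_{k+1},x_{k+2})$. By monotonicity of the graph with respect to the letters on a path, it suffices to show two things with positive probability. First, some strong level-$k_0$ vertex is reachable from $0$, which has positive probability by choosing $M_0$ large. Second, for every $k\ge k_0$, every strong level-$k$ vertex has among its out-neighbours strong level-$(k+1)$ vertices carrying letter $1$ and carrying letter $0$. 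Indeed, a strong vertex of level $k$ reaches every vertex of level $k+1$, so this property alone lets us follow an arbitrary word forever, and the vertices used are automatically distinct since the levels increase.

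The number $N_k^{(1)}$ of strong level-$(k+1)$ vertices with letter $1$ is a sum of independent indicators with mean about
\[
p\sum_{n\in[x_{k+1},x_{k+2})}\frac{c}{x_{k+3}}\;\asymp\; pc\,\frac{\lambda-1}{\lambda^{2}},
\]
which is bounded, not growing. This is the main obstacle: with fixed geometric scales the failure probability $e^{-\mathbb E N}$ does not decay in $k$, so it is not summable.

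I would first try to repair this by letting the strong condition depend on the vertex itself, say $M_n\ge 2n$ or $M_n\ge 3n$, instead of on the level. The relevant counts then behave like $pc\log(\cdot)$ over growing windows. Concretely, I would consider the set $A_k$ of vertices seeing some fixed prefix of length $k$ and show that, for both letters, the furthest reachable position grows geometrically while the number of such vertices in the newly covered region grows. The growth factor is $\approx pc/3$, reflecting that the new window must be covered three times over: once to contain the child, once for the child's own range, and once for distinctness/overlap. This is where I expect the constant $3$ to enter, and I expect the condition $pc>3$ to make the counts grow exponentially. Once they grow, the failure probabilities decay exponentially and are summable. A union bound over the $2^k$ prefixes would be too costly, so it must be avoided. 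I would avoid it by using that the property ``every strong vertex has strong children of both letters'' is a single event per level, not per word. Summing the per-level failures gives positive probability for $W_0=\Xi$.
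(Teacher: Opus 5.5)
Part~2 of your proposal is not yet an argument, and this is the main gap. The strong-vertex tree fails for the reason you give, and the proposed repair does not change this. Take the case $nP(R\ge n)\to c$, to which one may reduce by coupling. With $M_n\ge 2n$, a strong vertex $n$ sees $[n+1,3n]$, where the expected number of strong $1$-vertices is about $\sum_{m=n+1}^{3n}pc/(2m)\approx\frac{pc}{2}\log 3$, which is still bounded. There are a.s.\ infinitely many strong vertices with disjoint neighbourhoods, and each lacks a strong child of a prescribed letter with probability bounded below. So the event ``every strong vertex has strong children of both letters'' has probability zero and cannot serve as your per-level event.

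What is missing is the quantitative core of the paper. One follows the window of newly reachable vertices $C_n(\xi)=i_n(\xi)-i_{n-1}(\xi)$. This is a Markov chain, and given $C_n=s$ the ratio $C_{n+1}/C_n$ converges, as $s\to\infty$, to a Beta prime$(\beta,1)$ variable $Y$ with $\beta=pc$. Exponential growth is not where the $3$ comes from: $E\log Y>0$ already for $\beta>1$, which is why each single word is seen with positive probability when $pc>1$ (Remark~\ref{remark:individual_words}).

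The $3$ appears because the paper \emph{does} take a union bound over words. It does so only over the $2^{L+n}$ continuations, inside the $n$-th block of length $L+n$, of a configuration-dependent worst prefix. This is affordable because Proposition~\ref{lemma:exp_word} gives a per-word failure probability $3\alpha^{L+n}$ with $\alpha<1/2$. That bound comes from Doob's inequality for $e^{S_n}$, where $S_n=\sum_{i\le n}(\tilde\mu-\log\tilde Y_i)$. Its rate is controlled by $E(Y^{-1})=1/(\beta-1)$, and $E(Y^{-1})<1/2$ exactly when $\beta>3$. The argument also needs growing baselines $s_n$, so that the weak approximation has summable error and absorption at $0$ is excluded.

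Nothing in your sketch produces an exponentially small failure probability, and your ``covered three times'' heuristic has no counterpart in this mechanism. The correct ``single event per level'' is the paper's chain $C^*_n$ (the minimum over the two letters), since $\{C^*_n>0\ \forall n\}\subset\{W_0=\Xi\}$. But that chain still requires such a growth estimate.

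Part~1 is close to the paper. A blocking position at $x$ forces a cut point at $x+\ell$, and the zero block is unnecessary. Indeed $\PP(n \text{ is a cut point})=\prod_{i=1}^n(1-pP(R\ge i))\gtrsim n^{-(\gamma+\varepsilon)}$, with $\gamma=p\limsup_n nP(R\ge n)<1$, already has a divergent sum.

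The concluding step, however, is wrong. Harris/FKG gives $P(B_x\cap B_y)\ge P(B_x)P(B_y)$, which is the wrong direction for a second-moment bound. Nor are the events nearly independent. On $B_x$ the constraint on vertices before $x$ is automatically met for $B_y$, so $P(B_x\cap B_y)/(P(B_x)P(B_y))\approx (y/(y-x))^{pc}$ when $nP(R\ge n)\to c\in(0,1/p)$. For pairs in one dyadic window this ratio is of order one but not close to one. Hence $E N^2/(EN)^2$ tends to a constant larger than $1$, and sparsifying the family does not help. You only get $P(\text{a blocking position in }[2^k,2^{k+1}))\ge \eta>0$, not probability tending to one. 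Indeed, the cut points form a renewal set that misses a dyadic window with probability bounded away from zero.

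The missing ingredient is the paper's regeneration argument. Conditionally on $n$ being a cut point, whether a later $m$ is one depends only on vertices in $[n,m)$. So the number of cut points is geometric with parameter $\PP(\bar 1\in W_0)$, and $\PP(\bar 1\in W_0)>0$ if and only if $\sum_n \PP(A_n)<\infty$. Alternatively, $\{B_x\text{ i.o.}\}$ agrees a.s.\ with a tail event, so your uniform lower bound combined with Kolmogorov's 0-1 law would also finish.
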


\begin{remark}\label{remark:individual_words}
It also holds that \(\PP(W_0 = \Xi) = 0\) whenever \(nP(R\geq n)= \frac{1}{\min\{p,1-p\}}\) for all sufficiently large \(n\). Moreover, when \(\liminf_{n\to\infty}nP(R\geq n)> \frac{1}{\min\{p,1-p\}}\), then \(\PP(\xi\in W_0) > 0\) for any $\xi\in\Xi$. Therefore, \(\PP(\xi\in W_{\mathbb{Z}_+}) =1\) by translational invariance, so every word $\xi\in\Xi$ is seen somewhere, but not necessarily all words are seen from the same vertex. In particular, this also implies that almost all words are seen somewhere, i.e., both $\{\eta\in\Xi:\PP(\eta\in W_0)>0\}$ and $\{\eta\in\Xi:\PP(\eta\in W_{\mathbb{Z}_+})=1\}$ hold $\mu_q$-almost surely for any $q\in[0,1]$ (see Section~3 of \cite{lima_2008}).
\end{remark}

As a consequence of Theorem~\ref{theo:random_m} together with the translational invariant properties of the model, the following quenched result is also derived.

\begin{corollary}    
\label{theo:random_mQuenched}
     Given the parameter $p \in (0,1)$ and the distribution of $R$, the model of percolation of words in the random graph $\G$ is such that
\begin{enumerate}
\item If \( \displaystyle \limsup_{n\to\infty}nP(R\geq n) < \frac{1}{\min\{p,1-p\}},\) then, for $P-$almost all $\G$, \(P_{\G}(\exists v \in \Z_+ \colon W_v = \Xi) = 0,\)  
\item   If
\( \displaystyle \liminf_{n\to\infty}nP(R\geq n)>\frac{3}{\min\{p,1-p\}},
\)
then, for $P-$almost all $\G$, \(P_{\G}(\exists v \in \Z_+ \colon W_v = \Xi) = 1.\) 
\end{enumerate}
\end{corollary}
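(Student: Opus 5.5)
We derive Corollary~\ref{theo:random_mQuenched} from Theorem~\ref{theo:random_m} by translation invariance and ergodicity of the i.i.d.\ sequence $(M_n, X_n)_{n\in\Z_+}$ under the annealed law $\PP$. For each $v\in\Z_+$, the event $\{W_v=\Xi\}$ depends only on $(M_n,X_n)_{n\ge v}$. Indeed, the edges leaving $n$ go forward, so any path from $v$ stays in $[v,\infty)$, and $X_v$ itself is not read since a seen word uses $X_{v_1},X_{v_2},\dots$. The joint sequence $(M_n,X_n)_{n\in\Z_+}$ is i.i.d.\ under $\PP$, with $M_n\sim R$ and $X_n\sim$ Bernoulli$(p)$ independent. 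Hence $\PP(W_v=\Xi)=\PP(W_0=\Xi)$ for every $v$, by stationarity under the shift $\theta$.

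\textbf{Part 1.} Under the hypothesis of item 1, Theorem~\ref{theo:random_m} gives $\PP(W_0=\Xi)=0$. A countable union bound then yields $\PP(\exists v\colon W_v=\Xi)\le\sum_v \PP(W_v=\Xi)=0$. Writing $\PP(\exists v\colon W_v=\Xi)=\int P_\G(\exists v\colon W_v=\Xi)\,dP(\G)$, the integrand is nonnegative with zero integral, so it vanishes for $P$-almost every $\G$. This requires measurability of $\G\mapsto P_\G(A)$ for the event $A=\{\exists v\colon W_v=\Xi\}$, which is standard: $\{W_v=\Xi\}$ is measurable because, by a compactness/K\"onig-type argument on the locally finite forward graph, it equals the countable intersection over finite words $w$ and lengths of the events ``every finite prefix is seen along a path extendable to arbitrary length''.

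\textbf{Part 2.} Let $A=\{\exists v\colon W_v=\Xi\}$. Then $\theta^{-1}A\subseteq A$: if $W_v=\Xi$ for the shifted configuration, then $W_{v+1}=\Xi$ in the original. Since $\theta$ is measure preserving, $\PP(A\setminus\theta^{-1}A)=0$, so $A$ is invariant modulo null sets. Ergodicity of the i.i.d.\ shift then gives $\PP(A)\in\{0,1\}$. Under the hypothesis of item 2, $\PP(A)\ge\PP(W_0=\Xi)>0$ by Theorem~\ref{theo:random_m}, so $\PP(A)=1$. Since $P_\G(A)\le 1$ and $\int P_\G(A)\,dP(\G)=1$, we get $P_\G(A)=1$ for $P$-almost every $\G$.

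The only mildly delicate point is the measurability and the almost-invariance of $A$ in the presence of two layers of randomness. We handle it by working on the product space with the joint i.i.d.\ shift rather than quenched. The rest is routine.
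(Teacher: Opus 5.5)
Your proof is correct and is essentially the argument the paper has in mind (it states the corollary follows from Theorem~\ref{theo:random_m} plus translation invariance, without writing out details). Your steps are stationarity and a countable union bound for item 1, a 0-1 law for the shift-almost-invariant event $\{\exists v\colon W_v=\Xi\}$ for item 2, and Fubini to pass from $\PP$ to $P_{\G}$ in both items.

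Your measurability remark is garbled and can be stated more simply. One has $\{W_v=\Xi\}=\bigcap_{w}\{w \text{ is seen from } v\}$, the intersection taken over all finite words $w$. This holds by K\"onig's lemma, since every $M_n<\infty$ makes the forward graph finitely branching.
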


\bigskip
\color{black}
\noindent\textbf{\large Non-decreasing ranges}

\medskip

Now we consider the case in which the sequence of ranges $\mathcal{M} = (M_n)_{n\in \mathbb{Z}_+}$ is deterministic, non-decreasing, and such that $M_0 \geq 2$. Observe that if $M_0 < 2$, then the event $\{W_0 = \Xi\}$ has probability zero. Moreover, the monotonicity of \(\M\) has an interesting consequence: whenever a vertex \(x\) sees a vertex \(y\), every vertex \(x'\in\{x+1,x+2,...,y-1\}\) also sees \(y\). To distinguish this setting from the random case, the underlying graph $\G$ in the deterministic case will be denoted by $\bar\G$.

According to the parameter $p \in (0,1)$, consider the word $\xi^{\ast}$ defined by
\begin{equation}\label{eq:least_probable}
    \xi^*:=\begin{cases}
    (0,0,0,...), \text{ if }p\geq 1/2;\\
    (1,1,1,...), \text{ if }p< 1/2.
\end{cases}\end{equation} 
For a general graph $G$, the following proposition states that the word $\xi^{\ast}$ is the worst scenario case to percolate with respect to the occurrence of percolation of a fixed word. 
\begin{proposition}[\cite{lima_2008}{, Lemma 2}]\label{prop:worst}
    Given $p \in (0,1)$, for any graph $G$ and vertex $v\in V$, the model of percolation of words in $G$ is such that
    \[
    P_G(\xi \in W_v) \geq P_G(\xi^{\ast} \in W_v), ~\forall \xi \in \Xi.
    \]
\end{proposition}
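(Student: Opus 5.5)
Throughout assume $p\ge 1/2$, so that $\xi^{\ast}=(0,0,\dots)$ and $P(X_u=\xi^{\ast}_i)=1-p\le\min\{p,1-p\}$ for every $i$; the case $p<1/2$ is symmetric. The plan is a coupling argument. I do not yet see how to make it work, and I flag the gap in the third paragraph.

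\textbf{Reduction to finite problems.} The first step is to reduce to finite words on finite graphs. For $n\in\N$ let $A_n(\xi)$ be the event that $(\xi_1,\dots,\xi_n)$ is seen from $v$ along a self-avoiding path. On a locally finite graph, a compactness (K\"onig's lemma) argument gives $\{\xi\in W_v\}=\bigcap_n A_n(\xi)$. The $A_n(\xi)$ decrease in $n$, so $P_G(\xi\in W_v)=\lim_n P_G(A_n(\xi))$. If $G$ is not locally finite, I would instead exhaust $G$ by finite subgraphs and take monotone limits. It then suffices to prove
\[
P_G(A_n(\xi))\ \ge\ P_G(A_n(\xi^{\ast}))
\]
for every $n$, with $G$ replaced by its finite ball of radius $n$ around $v$.

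\textbf{Coupling.} For the finite statement I would attach i.i.d.\ uniform variables $U_u$ to the vertices. The events for $\xi$ and for $\xi^{\ast}$ would be realised through an exploration of the self-avoiding paths from $v$, for instance a depth-first search in a fixed order. The aim is a coupling in which every vertex revealed at depth $i$ is tested against the letter $\xi_i$ (respectively $\xi^{\ast}_i$). The test is acceptance of $U_u$ in a set of measure $P(X_u=\xi_i)$ (respectively $1-p$), and the acceptance sets would be chosen so that the $\xi^{\ast}$-acceptance set is contained in the $\xi$-acceptance set. The natural choice is $[0,1-p]$ for $\xi^{\ast}$, $[0,p]$ for the letter $1$, and $[0,1-p]$ for the letter $0$. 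Under such a coupling, every accepted $\xi^{\ast}$-path would also be an accepted $\xi$-path, which gives the inequality.

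\textbf{Main obstacle.} The letter of a vertex is fixed, while the letter demanded of it depends on the depth at which a path reaches it. A vertex can lie on different paths at depths $i$ and $j$ with $\xi_i\neq\xi_j$. With the choice above, a vertex with $U_u\le 1-p$ would then count as both $0$ and $1$. So the coupled configuration for $\xi$ would not have the correct law, since the events ``$X_u=0$'' and ``$X_u=1$'' must be complementary.

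My first attempt to repair this would be to give up a single global coupling and argue by induction on $n$ instead. I would condition on the letter at the first vertex of the path, so that $P_G(A_n(\xi))$ is written as a sum over neighbours $u$ of $v$ of probabilities involving $P(X_u=\xi_1)$ times events for the shifted word on $G$ with $v$ removed. The inductive hypothesis would then be applied to the shifted word, with $P(X_u=\xi_1)\ge 1-p$ used at each step. The overlap between the events for different neighbours makes this not literally a product. I would handle it by a Harris-type inequality, or by comparing the two inclusion--exclusion expansions term by term.

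If this fails, the fallback is to cite \cite{lima_2008} directly, since the statement is imported from there.
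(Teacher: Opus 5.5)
\textbf{Gap.} The paper does not prove this statement; it imports it from \cite{lima_2008}, so your fallback is what the paper does. As an argument, though, your proposal does not close. You diagnose correctly why the vertex-wise coupling fails, but the proposed repair also fails.

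For non-constant $\xi$, the events ``$(\xi_2,\dots,\xi_n)$ is seen from $u$ in $G-v$'' are neither increasing nor decreasing in the letters, so Harris's inequality does not apply. Where it does apply, it gives positive correlation, hence only the upper bound $P(\bigcup_j B_j)\le 1-\prod_j(1-P(B_j))$. The $\xi$ side of your induction needs the reverse, a negative-dependence bound, which is not available. Comparing inclusion--exclusion expansions term by term proves nothing, because the terms alternate in sign.

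The reduction to finite words also works against you. It replaces the claim by the stronger statement $P_G(A_n(\xi))\ge P_G(A_n(\xi^{\ast}))$ for every $n$, and it discards the one feature that makes $\xi^{\ast}$ tractable. Separately, exhausting a non-locally-finite $G$ by finite subgraphs makes no sense for infinite words, since none is seen on a finite graph. Every graph in this paper has finite out-degrees, which is all that is needed below.

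\textbf{Missing idea.} For locally finite $G$, $\{\xi^{\ast}\in W_v\}$ is a connectivity event. By K\"onig's lemma it is the event that the open cluster of $v$ is infinite in site percolation of density $q=\min\{p,1-p\}$, with $v$ declared open. Explore that cluster by breadth-first search, using i.i.d.\ uniforms $(U_u)$.
\begin{itemize}
\item When $u$ is first examined as a neighbour (out-neighbour, for oriented graphs) of a tree vertex at depth $d-1$, its would-be depth $d$ is already determined by the history.
\item Set $X_u=\xi_d$ if $U_u\le P(X_u=\xi_d)$, and $X_u=1-\xi_d$ otherwise.
\item Declare $u$ open, so that it joins the tree at depth $d$, iff $U_u\le q$. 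Vertices never examined get $X_u=\mathbbm{1}\{U_u\le p\}$.
\end{itemize}
Each $U_u$ is read once, and the letter favoured at $u$ depends only on previously read uniforms. Hence $(X_u)$ is i.i.d.\ Bernoulli$(p)$ and the open indicators are i.i.d.\ Bernoulli$(q)$. Every open vertex at depth $i$ satisfies $X_u=\xi_i$, because $q\le P(X_u=\xi_i)$.

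If the cluster is infinite, the BFS tree is infinite and locally finite, so it contains a ray $v=v_0,v_1,\dots$. Its vertices are distinct, consecutive ones are adjacent, and $v_i$ has depth $i$, hence $X_{v_i}=\xi_i$. Thus $\{\xi^{\ast}\in W_v\}\subset\{\xi\in W_v\}$ under the coupling, which gives the inequality.

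The obstacle you flagged disappears because only paths inside a spanning tree are used, and there every vertex has a single depth, fixed before its letter is revealed. This is also why you should not pass to finite $n$. On a $4$-cycle through $v$ with all vertices open, there is an open self-avoiding path of length $3$, but the BFS tree has depth $2$.
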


Despite Proposition~\ref{prop:worst}, when $\M$ is non-decreasing, an interesting characterization for the occurrence of simultaneous percolation of all words is given in next result.
\begin{theorem}\label{teo:least_probable}
   Given the parameter $p \in (0,1)$, let $\mathcal{M} = (M_n)_{n\in \mathbb{Z}_+}$ be a non-decreasing sequence such that $M_0\geq 2$ and let $\bar\G$ be the graph defined according to \eqref{eq:En}. Then,
\[P_{\bar\G}(\xi^*\in W_0)>0 \iff P_{\bar\G}(W_0=\Xi)>0.\]
\end{theorem}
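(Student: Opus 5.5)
We treat the case $p\geq 1/2$, so that $\x=(0,0,\dots)$; the case $p<1/2$ is symmetric after exchanging the letters. The implication ``$\Leftarrow$'' is trivial, since $\{W_0=\Xi\}\subset\{\x\in W_0\}$. For ``$\Rightarrow$'' the plan is to show that both events are governed by the same summability condition, namely
\[
\sum_{n\in\Z_+} p^{M_n}<\infty .
\]
Everything rests on the interval property stated before the theorem: if $x$ sees $y$, then every $x'\in\{x+1,\dots,y-1\}$ also sees $y$.

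\textbf{Step 1 (greedy reduction).} Let $g_1<g_2<\cdots$ be the zeros of $(X_n)_{n\ge1}$, and set $g_0=0$. We claim that $\x\in W_0$ if and only if each $g_k$ sees $g_{k+1}$. The ``if'' direction is clear. For ``only if'', take any all-zero path $0=v_0,v_1,v_2,\dots$ and fix $k$. Some index $j$ satisfies $v_j\le g_k<v_{j+1}$. Since $v_{j+1}$ is a zero beyond $g_k$, we have $v_{j+1}\ge g_{k+1}$. By the interval property, $g_k$ sees $v_{j+1}$, and hence it also sees the nearer vertex $g_{k+1}$.

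The same argument applies to an arbitrary word $\xi$, with the path that always jumps to the nearest vertex carrying the next required letter. Consequently the following event $B$ implies $W_0=\Xi$: for every $n\ge0$, the window $\{n+1,\dots,n+M_n\}$ contains both a $0$ and a $1$. If some window is monochromatic we do not need $B$ exactly. A sufficient condition is that every vertex visited by greedy paths has a non-monochromatic window, and the plan below ensures $B$ itself.

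\textbf{Step 2 ($P_{\bar\G}(\x\in W_0)>0\Rightarrow\sum_n p^{M_n}<\infty$).} Consider the events $E_n=\{X_n=0,\ X_{n+1}=\dots=X_{n+M_n}=1\}$ for $n\ge1$, together with the analogous event at the origin. By Step 1, $E_n\cap\{\x\in W_0\}=\emptyset$ for every $n$, because every zero lies on the greedy path. Also $P(E_n)=(1-p)p^{M_n}$. For $n<m\le n+M_n$ the events $E_n$ and $E_m$ are disjoint, since $X_m$ would have to be both $0$ and $1$. For $m>n+M_n$ they are independent. Pairwise correlations are therefore at most product-like, and the Chung--Erd\H{o}s (Kochen--Stone) inequality applied to $\sum_{n\le N}\mathbbm 1_{E_n}$ should show the following: if $\sum_n p^{M_n}=\infty$, then infinitely many $E_n$ occur almost surely, so $P_{\bar\G}(\x\in W_0)=0$. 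This is the step I expect to be the main obstacle. The second-moment bound needs
\[
\sum_{n\neq m}P(E_n\cap E_m)\le\Big(\sum_n P(E_n)\Big)^2,
\]
which follows from the disjoint/independent dichotomy. We must also check that Kochen--Stone, which a priori gives only positive probability, upgrades to probability one. This can be done via tail triviality of the i.i.d.\ letters, since ``infinitely many $E_n$'' is a tail event.

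\textbf{Step 3 (summability $\Rightarrow P_{\bar\G}(W_0=\Xi)>0$).} Because $1-p\le p$, we also have $\sum_n(1-p)^{M_n}<\infty$. Let $F_n$ be the event that the window $\{n+1,\dots,n+M_n\}$ is monochromatic; then $P(F_n)\le 2p^{M_n}$. Fix $K$ so large that $\sum_{n\ge K}2p^{M_n}<1/2$. The window of any $n\ge K$ lies in $\{K+1,K+2,\dots\}$, so the event $G_K=\bigcap_{n\ge K}F_n^c$ depends only on $(X_k)_{k>K}$, and $P(G_K)\ge 1/2$ by the union bound.

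Independently, let $H_K$ be the event that $(X_1,\dots,X_K)$ alternates $0,1,0,1,\dots$; it has positive probability. On $H_K$, every window of some $n<K$ contains at least two consecutive vertices of $\{1,\dots,K\}$, because $M_n\ge M_0\ge2$; the only exception is $n=K-1$, whose window starts at $K$. Such a window is therefore not monochromatic. The exceptional window $\{K,\dots,K-1+M_{K-1}\}$ is handled by also prescribing $X_{K+1}\neq X_K$, which we absorb by enlarging $K$ by one and adjusting the parity of the alternation. Hence $H_K\cap G_K\subset B$. By independence, $P_{\bar\G}(B)\ge P(H_K)\,P(G_K)>0$, and Step 1 gives $P_{\bar\G}(W_0=\Xi)>0$.
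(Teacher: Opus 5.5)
Your argument is correct in substance but takes a genuinely different route from the paper's. The paper never determines when $\xi^*$ is seen. It uses Proposition~\ref{prop:worst} to get $P_{\bar\G}(\bar 0\in W_0)>0$ and $P_{\bar\G}(\bar 1\in W_0)>0$, and upgrades this with Kolmogorov's 0-1 law to the almost sure existence of vertices $V_0,V_1$ from which the constant words are seen. It then extracts a pair $(v_0,v_1)$ with $P_{\bar\G}(G(v_0,v_1))>0$, glues on an independent finite prefix event so that both constant words are seen from $0$, and concludes with Lemma~\ref{lem:xi_0xi_1}. The condition (S) in that lemma is exactly your event $B$.

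You instead pass through the explicit criterion $\sum_n \max\{p,1-p\}^{M_n}<\infty$. Your Step 1 is the paper's identity (\ref{eq:w(m)_subset}). Your Step 2 replaces the paper's time-change argument from Item 1 of Theorem~\ref{theo:mdeterministic} by a second-moment bound on the position-indexed events $E_n$. Since the disjoint-or-independent dichotomy makes all covariances nonpositive, Chebyshev already gives $P_{\bar\G}(\bigcup_{n\le N}E_n)\ge 1-1/\sum_{n\le N}P_{\bar\G}(E_n)\to 1$, so the tail-triviality step is unnecessary. Your Step 3 is a union bound over monochromatic windows plus an alternating prefix, and it handles all words at once rather than going through $\bar 0$ and $\bar 1$ separately.

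The payoff is a stronger statement. For non-decreasing $\mathcal M$ with $M_0\ge 2$ you get $P_{\bar\G}(W_0=\Xi)>0\iff P_{\bar\G}(\xi^*\in W_0)>0\iff \sum_n\max\{p,1-p\}^{M_n}<\infty$. This contains both items of Theorem~\ref{theo:mdeterministic}, and it also decides the borderline regime $\limsup_n M_n/\log n=C$ that Remark~\ref{remark:m_nondecreasing} leaves open. The paper's proof, by contrast, is purely structural (monotonicity, the 0-1 law, independence of coordinates) and needs no estimate of the probability that $\xi^*$ is seen.

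One step does not work as written. $H_K$ fixes $X_1,\dots,X_K$ and $G_K$ is measurable with respect to $(X_k)_{k>K}$, so the window $\{K,\dots,K-1+M_{K-1}\}$ of $n=K-1$ is controlled by neither event. Your proposed remedy is circular. Putting $X_{K+1}\neq X_K$ into the prefix event destroys its independence from $G_K$. Replacing $(H_K,G_K)$ by $(H_{K+1},G_{K+1})$ only moves the straddling window to $n=K$.

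Put the extra requirement into the tail event instead. Let $a$ be the deterministic value that $H_K$ prescribes for $X_K$, choose $K$ with $\sum_{n\ge K}2p^{M_n}<1-p$, and set $G'_K:=G_K\cap\{X_{K+1}\neq a\}$. This event is still measurable with respect to $(X_k)_{k>K}$. It satisfies $P_{\bar\G}(G'_K)\ge (1-p)-\sum_{n\ge K}2p^{M_n}>0$. Moreover $H_K\cap G'_K\subset B$, because $M_{K-1}\ge 2$ puts both $K$ and $K+1$ in the straddling window. Hence $P_{\bar\G}(B)\ge P_{\bar\G}(H_K)\,P_{\bar\G}(G'_K)>0$, and the rest of your argument goes through.
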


Through Theorem \ref{teo:least_probable}, it is possible to draw conclusions about the event \(\{W_0=\Xi\}\) by analyzing the percolation of a constant word, which is, in turn, an easier problem to study.

Given a parameter $p \in (0,1)$, consider the auxiliary constant $C$ defined by
\[
C = \frac{1}{\log\Big(\frac{1}{\max\{p,1-p\}}\Big)}.
\]
The next result relies on Theorem \ref{teo:least_probable} and establishes a phase transition with critical value $C$.

\begin{theorem}\label{theo:mdeterministic}
    Given the parameter $p \in (0,1)$ and $\bar\G$ the graph defined according to \eqref{eq:En}. The following items hold:
\begin{enumerate}
    \item If $\displaystyle
\limsup_{n\to\infty} \frac{M_n}{\log(n)} < C$, then
\(P_{\bar\G}(W_0=\Xi)=0,\)

\item If $\displaystyle
\liminf_{n\to\infty} \frac{M_n}{\log(n)} > C$ with $M_n\ge 2$ for all $n\in\mathbb{Z}_+$, then
\(P_{\bar\G}(W_0=\Xi)>0.\)
\end{enumerate}
\end{theorem}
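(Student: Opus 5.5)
By Theorem~\ref{teo:least_probable}, both items reduce to studying whether $P_{\bar\G}(\xi^*\in W_0)>0$. By symmetry we may take $p\ge 1/2$, so $\xi^*=(0,0,\dots)$ and $\max\{p,1-p\}=p$. The event $\{\xi^*\in W_0\}$ holds exactly when there is an infinite increasing path from $0$ through vertices labelled $0$ with steps allowed by the ranges. Since $\M$ is non-decreasing, if a vertex sees a later vertex then so does every vertex in between. So the greedy path (always jump to the farthest reachable $0$) is optimal, and the event fails precisely when some block of consecutive $1$'s is too long to be jumped. Concretely, if the path is at a $0$-vertex $x$ and $X_{x+1}=\dots=X_{x+M_x}=1$, the path dies. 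With these modifications the event reduces to: there is no run of $1$'s of length at least $M_x$ beginning right after a vertex $x$ on the path. Up to monotone comparisons, this is essentially: no $x$ is followed by $M_x$ consecutive $1$'s.

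For item~1, suppose $M_n\le (C-\varepsilon)\log n$ for large $n$. I take disjoint blocks of length $M_n+1$ at positions $n$ in a sparse sequence (e.g.\ consecutive blocks). The event that a block consists entirely of $1$'s has probability $p^{M_n}\ge n^{-(C-\varepsilon)\log(1/p)}=n^{-1+\delta}$. Summing over disjoint blocks of length $O(\log n)$ gives a divergent series, roughly $\sum n^{-1+\delta}/\log n=\infty$. By Borel--Cantelli (independence of disjoint blocks), infinitely many such full-$1$ blocks occur almost surely. A block $[a,a+L]$ of $1$'s with $L\ge M_a$ cannot be crossed, because any vertex $y<a$ jumps to at most $y+M_y\le a+M_a$ by monotonicity. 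Here I need care: a jump from $y$ lands beyond the block only if $y+M_y>a+L$. Since $M_y\le M_a\le L$ and $y<a$, this is impossible, though I should choose the block length as $M_{a+L}$ to be safe. Hence the path from $0$ is blocked almost surely, so $P(\xi^*\in W_0)=0$, and Theorem~\ref{teo:least_probable} gives $P_{\bar\G}(W_0=\Xi)=0$.

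For item~2, suppose $M_n\ge (C+\varepsilon)\log n$ for $n\ge n_0$. Let $A_n$ be the event that $X_{n+1}=\dots=X_{n+M_n}=1$, so $P(A_n)=p^{M_n}\le n^{-1-\delta}$, which is summable. Then $\sum_{n\ge N}P(A_n)<1$ for some $N$. I then force a good start: with positive probability, $X_1=\dots=X_N=0$, and this event is independent of or positively correlated with the complement of $\bigcup_{n\ge N}A_n$. Both are decreasing events in the $1$'s, so FKG (Harris) gives a positive probability for their intersection. On this intersection the greedy path walks through the first $N$ vertices, which are zeros, since $M_n\ge 1$. After that, from every $x\ge N$ there is a $0$ within distance $M_x$, so the path continues forever. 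This gives $P(\xi^*\in W_0)>0$, and Theorem~\ref{teo:least_probable} concludes. The main obstacle is the precise blocking argument in item~1, where the monotonicity must be used carefully to show that a long enough run of $1$'s cannot be crossed by any earlier jump. I handle this by defining runs relative to $M$ at the run's right end, which still keeps probabilities of order $n^{-1+\delta}$ because $M$ grows only logarithmically.
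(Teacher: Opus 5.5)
Your proof is correct. Item 2 follows essentially the paper's argument, while item 1 takes a genuinely different and somewhat simpler route.

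\textbf{Item 2.} The paper works with the spacings $\tau_{n+1}$ between consecutive zeros. It uses $T_n\ge n$ and monotonicity to reduce to $\bigcap_n\{\tau_{n+1}\le M_n\}$, applies Borel--Cantelli to get a tail intersection of positive probability, and then uses independence and $M_n\ge 2$ to handle the first $k$ steps. You do the same with the positional events $A_n$ and the forced initial segment $X_1=\dots=X_N=0$. Your two events depend on the disjoint coordinate sets $\{1,\dots,N\}$ and $\{N+1,N+2,\dots\}$, so they are exactly independent and Harris--FKG is not needed.

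\textbf{Item 1.} The paper writes $\{\xi^*\in W_0\}=\bigcap_n\{\tau_{n+1}\le M_{T_n}\}$ and must then decouple the random index $T_n$ from $\tau_{n+1}$. It shows $\tau_{n+1}>M_{bn}$ infinitely often by the second Borel--Cantelli lemma, shows $T_n<bn$ eventually by a Chernoff bound for sums of geometric variables, and combines the two via monotonicity. You instead place deterministic, disjoint, consecutive blocks $[a_k,a_k+M_{a_k}]$ and note that a single all-$1$ block already disconnects $0$ from infinity for $\bar 0$. Indeed, any vertex $y<a_k$ reaches at most $y+M_y\le a_k-1+M_{a_k}$. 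This removes the random indices and the large-deviation estimate entirely. Like the paper's argument, it also covers the critical case $M_n\le C\log n$, since $\sum_m 1/(m\log m)=\infty$.

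\textbf{Minor points.}
\begin{itemize}
\item The left-endpoint choice $L=M_a$ already suffices, so the suggested switch to $M_{a+L}$ is unnecessary.
\item A block of $M_a+1$ sites is all $1$'s with probability $p^{M_a+1}$, not $p^{M_a}$. This is only a harmless constant factor.
\item The blocks must really be packed consecutively, as in your computation with $a_{k+1}=a_k+M_{a_k}+1$. A genuinely sparse choice such as $a_k=2^k$ makes $\sum_k p^{M_{a_k}+1}$ converge, and the argument fails.
\item Item 1 needs only the trivial inclusion $\{W_0=\Xi\}\subset\{\xi^*\in W_0\}$, not Theorem~\ref{teo:least_probable}.
\item You assume $\mathcal M$ non-decreasing, which is the standing assumption for $\bar{\mathcal G}$. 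The paper additionally removes this assumption in item 2 via $M_n'=\inf_{k\ge n}M_k$.
\end{itemize}
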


\begin{remark}\label{remark:m_nondecreasing}
It is also possible to show that \(P_{\bar\G}(W_0 = \Xi) = 0\) whenever \(M_n=C \log(n)\) for all sufficiently large \(n\). However, the proof fails for a general $M_n$ with $\displaystyle
\limsup_{n\to\infty} \frac{M_n}{\log(n)} = C$.
\end{remark}

Note that $\mathcal{M}$ does not need to be non-decreasing in Theorem \ref{theo:mdeterministic}. Although this assumption appears in Theorem \ref{teo:least_probable}, we show that it may be ignored since increasing some values in $\mathcal{M}$ cannot decrease \(P_{\bar\G}(W_0 = \Xi)\).

\color{black}

\subsection{Related Works}\label{relatedworks}

The problem of percolation of words was introduced in \cite{benjamini_kesten_1995} who studied the problem on the hypercubic lattice $\mathbb{L}^d$ with $p=1/2$. They showed that \(P_{\mathbb{L}^d}(W_{\mathbb{Z}^d}=\Xi)=1\) for $d\ge 10$ and \(P_{\mathbb{L}^d}(\text{$W_v=\Xi$ for some $v\in\mathbb{Z}^d$})=1\) for $d\ge 40$. More than 20 years later, \cite{nolin_tassion_teixeira_2023} made a great improvement by showing that $P_{\mathbb{L}^d}(W_{\mathbb{Z}^d}=\Xi)=1$ for $d\ge 3$ and $p\in(p_c^{\text{site}}(\mathbb{L}^d),1-p_c^{\text{site}}(\mathbb{L}^d))$, where $p_c^{\text{site}}(\mathbb{L}^d)$ is the critical parameter for site percolation on the $d$-dimensional hypercubic lattice.

There are also some results on percolation of words on other short-range graphs. On the triangular lattice with \(p=1/2\), even though the constant words \((0,0,...)\) and \((1,1,...)\) are not seen with positive probability, \cite{kesten_sidoravicius_zhang_1998} showed that almost all words are seen with respect to the measure \(\mu_q\), \(q\in(0,1)\). Moreover, \cite{kesten_sidoravicius_zhang_2001} showed that ${\text{$W_v=\Xi$ for some $v\in \mathbb{Z}^d$}}$ occurs almost surely when $p\in(p_c^{\text{site}}(\mathbb{L}^2),1-p_c^{\text{site}}(\mathbb{L}^2))$ on the closed-packed graph of the square lattice, i.e., the graph obtained by adding one diagonal to each face of $\mathbb{L}^2$.

There are also several works studying variations of $\mathbb{L}^d (d \geq 2)$ in which long-range edges are allowed, with the main purpose being to investigate the truncation question, i.e., whether there exists some $K\in\mathbb{N}$ such that all words can still be seen when edges of length greater than $K$ are suppressed. In \cite{lima_2008,lima_sanchis_silva_2011} the truncated graph is deterministic, while in \cite{gomes_lima_silva_2022,gomes_lima_silva_2026} it  corresponds to a bond percolation configuration. It is worth noting that, although these models also involve long-range graphs, the cited works assume that each edge is present independently of the others, which differs from the definitions used in this work.

\subsection{Strategy of proof and Organization}

To show that $\mathbb{P}(W_0=\Xi)=0$ in the i.i.d. case, we fix the word $\xi^*$ and show that it is not seen, taking advantage of the fact that $\xi^*$ is both the hardest word to see and the easiest one to analyze. On the other hand, to show that $\mathbb{P}(W_0=\Xi)>0$, we fix a word $\xi\in\Xi$ and analyze the sequence $(C_n(\xi))_{n\ge 1}$. Given $C_n(\xi)$ vertices, we consider those whose states match $\xi_n$ and then use their ranges to determine the set of new vertices seen, whose cardinality is $C_{n+1}(\xi)$. By means of a reparametrization $(b_n)_{n\ge1}$ of time and an increasing sequence $(r_n)_{n\ge1}$, we show that, for any word $\xi$, the probability of $C_{b_{n+1}}(\xi)<r_{n+1}$ conditional on $C_{b_n}(\xi)\ge r_n$ decays exponentially. We use this exponential decay to compensate for the number of words, which also grows exponentially with the number of digits.

For the non-decreasing case, when both $P_{\bar{\mathcal{G}}}((0,0,\ldots)\in W_0)>0$ and $P_{\bar{\mathcal{G}}}((1,1,\ldots)\in W_0)>0$, we show that
$P_{\bar{\mathcal{G}}}({(0,0,\ldots),(1,1,\ldots)}\subset W_0)>0$
by describing this event as the intersection of two independent events, one depending only on a finite initial segment of the sequence $(X_n)_{n\in\mathbb{Z}_+}$ and the other having a positive probability, obtained with the help of Kolmogorov's 0-1 law for tail events. We also show that if both constant words $(0,0,\ldots)$ and $(1,1,\ldots)$ are seen, then we can suitably interlace the vertices to see any word $\xi\in\Xi$. For these results, it is important that $\mathcal{M}$ be non-decreasing, ensuring that whenever a vertex $x$ sees a vertex $y$, every vertex $x'\in{x+1,x+2,\ldots,y-1}$ also sees $y$.

The remainder of the paper is organized as follows. Section \ref{sec:i.i.d.} establishes the results for the i.i.d. case, with the exception of the main technical ingredient of the proof: Proposition~\ref{lemma:exp_word}, an auxiliary result concerning the growth of $C_n(\xi)$. This proposition constitutes the most technically demanding part of the paper, and its lengthy proof is therefore deferred to Section \ref{sec:lemmas}. Section \ref{sec:nondecreasing} contains the proof for the non-decreasing case. It is worth mentioning that the proofs for the i.i.d. and non-decreasing cases follow different strategies and can therefore be read independently.

\section{Proofs for the i.i.d.~case}\label{sec:i.i.d.}

This section is devoted to proving Theorem~\ref{theo:random_m}. For the conclusion of Item~\textit{1}, it is enough to show that $\PP(\x \in W_0) = 0$, where $\x$ is as defined in \eqref{eq:least_probable}. Without loss of generality, during this section, we assume $p < 1/2$.

For each $n \in \Z_{+}$, let $\bar M_n = \mathbbm{1}_{\{X_n = 1\}}M_n$. Since we are investigating the occurrence of the word $\x$ consisting entirely of 1's, in this auxiliary sequence $(\bar M_n)_{n \in \Z_+}$, we are deleting all the oriented edges starting from vertices where the letter $0$ is assigned. 

A notion of cut points is introduced as follows. We say that $0$ is a cut point. We also say that the vertex $n \in \N$ is a cut point if, for each $ j \in \{0, \dots, n-1 \}$, we have $j +\bar M_j < n $. Let $A_n$ denote the event where $n$ is a cut point.

A key observation is that, due to the regenerative nature of cut points, the number of cut points is geometrically distributed with parameter $\mathbb{P}(\x\in W_0)$. Therefore, we obtain the following conclusion:
\begin{equation}\label{eq: number of cut points}
    \PP(\x \in W_0) > 0 \quad \Longleftrightarrow  \quad \sum_{n \geq 0} \PP(A_n) < \infty.
\end{equation}

\begin{proof}[\textbf{Proof of Item 1 of Theorem~\ref{theo:random_m}}]
Recall that we are considering $p < 1/2$. Assume that the common distribution of the ranges $R$ is such that $\limsup_{n\to \infty} nP(R \geq n) < 1/p.$ Observe that, under this assumption, the common distribution $\bar R$ of the i.i.d. sequence $(\bar M_n)_{n \in \Z_+}$ is such that 
\begin{equation}\label{eq gamma}
\limsup_{n\to \infty} nP(\bar R \geq n)  =  \limsup_{n\to \infty} n\big[pP(R \geq n)\big] =\colon \gamma  < 1.
\end{equation}

Estimates for the probability $\PP(A_n)$ of the occurrence of cut points are given.
\begin{equation}\label{eq:p_an}
    \PP(A_n) = \prod_{j = 0}^{n-1} \PP(\bar M_j < n-j) = \prod_{i=1}^{n} \big[1 -P(\bar R \geq i)\big], \quad n \geq 1.
\end{equation}
From the inequality in \eqref{eq gamma}, given $\varepsilon > 0$, we obtain that, for some constant $c > 0$ and every $n \geq 1$, $\PP(A_n) \geq c/n^{\gamma + \varepsilon}.$ Hence, choosing $\varepsilon$ sufficiently small such that $\gamma + \varepsilon \leq 1$, $\sum_{n \geq 0} \PP(A_n) =\infty$ and, consequently, \eqref{eq: number of cut points} yields $\PP(\x \in W_0) = 0$.  In conclusion, we write
\[
\PP(W_0 = \Xi) \leq \PP(\x \in W_0) = 0
\]
to finish the proof. Moreover, note that the results claimed in Remark \ref{remark:individual_words} follow with small modifications. Indeed, when $\liminf_{n\to\infty}n P(\bar R\ge n)>1$, then analogous arguments show that $\sum_{n \geq 0} \PP(A_n) < \infty$, and the extension to every word is due to Proposition~\ref{prop:worst}. If \(P(\bar R\ge n)=1/n\) for sufficiently large $n$, then \(\PP(A_n)=O(1/n)\) since several terms cancel within the multiplication in (\ref{eq:p_an}).
\end{proof}

For the proof of Item~\textit{2}, we start with some definitions. Fix any word $\xi=(\xi_1,\xi_2,...)\in\Xi$. Define the sequence $(i_n(\xi))_{n\ge 0}$ by setting $i_0(\xi):=0$, $i_1(\xi):=M_0$ and recursively for $n\geq 1$,
\[i_{n+1}(\xi):=\begin{cases}
    \max\{i+M_i\mathbbm{1}_{(X_i=\xi_n)}:i\in (i_{n-1}(\xi), i_n(\xi)]\},&\text{if  $i_n(\xi)\neq i_{n-1}(\xi)$};\\
    i_n(\xi),&\text{otherwise}.
\end{cases}\]

We also define
\begin{equation}\label{eq: C}
    C_n(\xi) := i_{n}(\xi) - i_{n-1}(\xi), \text{ for } n \geq 1.
\end{equation}

Observe that, in this construction of the sequence $(C_n(\xi))_n$, the ranges are updated depending on the current digit of the word which is being considered. Note that the word $\xi$ is seen if $C_n(\xi)>0$ for every $n\in \mathbb{N}$. The opposite does not hold for a general word.

Consider also the auxiliary sequence $(i^*_n)_{n\ge 0}$ by setting $i_0^*:=0$, $i_1^*:=M_0$ and recursively for $n\geq 1$:
\begin{equation}
    i_{n+1}^*:=
    \begin{cases}
    \min\left\{\begin{array}{l}\max\{i+M_i\mathbbm{1}_{(X_i=0)}:i\in (i_{n-1}^*, i_n^*]\},\\
    \max\{i+M_i\mathbbm{1}_{(X_i=1)}:i\in (i_{n-1}^*, i_n^*]\}\end{array}\right\},&\text{if } i_n^*\neq i_{n-1}^*;\\
    i_n^*,&\text{otherwise}.
\end{cases}
\end{equation}

Let
\begin{equation}\label{eq: Cast}
    C^*_n:=i_n^*-i_{n-1}^*, \text{ for } n \geq 1.
\end{equation} 
Note that the definition of $C_n^*$ is similar to that of $C_n(\xi)$, but instead of considering the following digit of the word $\xi$ to set $i_{n+1}(\xi)$, we consider the minimum over the two possible digits to set $i_{n+1}^*$. 
Therefore, $W=\Xi$ if $C_n^*>0$ for every $n\in \mathbb{N}$, because both digits are seen in every step. Then, to conclude the proof, it remains to show that $\PP \big(\cap_{n \geq 1} \{C_n^*>0\}\big) = 0.$ 

The parameters $(L, \Phi)$ will be introduced. $L$ is a natural number, and $\Phi = (r_n)_{n \geq 1}$ is an increasing sequence of positive integers. Define an auxiliary sequence $(b_n)_{n\ge 0}$ depending on the parameter $L$ by $b_0:=1$ and
\begin{equation}\label{eq:def_bn}
b_n=b_n(L):=1+\sum_{j=0}^{n-1} (L+j), \text{ for $n\in\mathbb{N}$.}\end{equation}
Also, for each word $\xi \in \Xi$, define the sequence of events $\big(\mathcal{A}_n(\xi)\big)_{n \geq 1}$ by 
\begin{equation}\label{eq: eventobom}
    \mathcal{A}_{n}(\xi):=\{C_{b_{n}}(\xi)\geq r_{n}\}, \text{ for }n\in\mathbb{N}.
\end{equation}

Noticing that $\big( C_n(\xi) \big)_{n\geq 1}$ is a Markov chain with $0$ as an absorbing state, the occurrence of $\mathcal{A}_n$ for all $n \geq 1$ yields that $C_n > 0$ never reaches $0$, which means that $\xi \in W_0$ as argued before.

For the moment, we assume the following proposition, whose proof is postponed to Section~\ref{sec:lemmas}. 

\begin{proposition}~\label{lemma:exp_word}
Given a parameter $p \in (0,1/2]$, suppose that the common distribution of the ranges $R$ satisfies
\[
\liminf_{n\to\infty} nP(R\geq n) > {3}/{p}.
\]
Then there exist a positive constant $\alpha < 1/2$ and parameters $(L,\Phi)$ such that
\begin{equation}\label{eq: cota}
\PP(\mathcal{A}_{n+1}(\xi) \mid C_{b_n}(\xi)=r ) \geq 1 - 3\alpha^{L+n},
\quad \text{for every $\xi\in\Xi$, $r\ge r_n$, and $n \geq 1$.}
\end{equation}
\end{proposition}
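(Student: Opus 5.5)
We will analyse the one-step transition of the chain $(C_n(\xi))_n$ and then iterate it over the block of $L+n$ steps from $b_n$ to $b_{n+1}$. Fix $\xi$ and condition on $C_{b_n}(\xi)=r\ge r_n$. The vertices in the current window $(i_{m-1}(\xi),i_m(\xi)]$ have not been used before, so their letters and ranges are fresh, independent of the past, and i.i.d. Given $C_m(\xi)=c$, the vertices of the window that carry the letter $\xi_m$ form a Binomial$(c,q)$ set with $q\in\{p,1-p\}$, so $q\ge p$. Each such vertex $i$ contributes $i+M_i$. Thus
\[
C_{m+1}(\xi)=\max_{i}\big(i+M_i\mathbbm{1}_{(X_i=\xi_m)}\big)-i_m(\xi).
\]
Write $i=i_m(\xi)-j$ with $j\in\{0,\dots,c-1\}$. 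Then $C_{m+1}(\xi)<k$ exactly when every matching vertex at distance $j$ has $M_i<k+j$. Using $P(R\ge n)\ge \beta/n$ for large $n$ with $\beta>3/p$, we get
\[
\PP(C_{m+1}(\xi)<k\mid C_m(\xi)=c)=\prod_{j=0}^{c-1}\Big(1-qP(R\ge k+j)\Big)\le \exp\Big(-p\beta\sum_{j=0}^{c-1}\frac{1}{k+j}\Big)\approx\Big(\frac{k}{k+c}\Big)^{p\beta}.
\]
Since $p\beta>3$, the choice $k=c\,\theta$ for a small $\theta$ gives $C_{m+1}\ge \theta c$ except with probability about $(\theta/(1+\theta))^{p\beta}$. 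The choice $k$ of order $c\cdot t$ controls upward moves. The estimate holds uniformly in $\xi$, because only $q\ge p$ is used. This uniformity is what produces the "for every $\xi$" in \eqref{eq: cota}.

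The danger is that $C$ shrinks geometrically, so a single one-step bound is not enough. The plan is to show that over a block of $L+n$ steps the chain grows with overwhelming probability. Note that $\PP(C_{m+1}\ge s c\mid C_m=c)\approx 1-(1+s)^{-p\beta}$ is bounded below for every fixed $s$. The log-increments $\log(C_{m+1}/C_m)$ therefore stochastically dominate an i.i.d. sequence $Y_m$ with tail $\PP(Y\ge y)\approx 1-(1+e^{y})^{-p\beta}$ and lower tail $\PP(Y\le -y)\lesssim e^{-p\beta y}$. As long as $c$ stays large, these $Y_m$ are independent of the past.

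The real issue is $E[Y]$. Its positive part behaves like $\int \PP(Y>y)\,dy$ and is infinite-like, which cannot be right because a probability is at most 1. I would instead truncate: $C_{m+1}\ge \theta C_m$ for a fixed $\theta<1$ except with probability $\approx\theta^{p\beta}$. The growth comes from the event $C_{m+1}\ge 2C_m$, which has probability $\approx 1-3^{-p\beta}$. With $p\beta>3$ the "good" steps then dominate.

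We then carry out a Chernoff bound for the number of bad steps in a block of length $L+n$. We require that
\begin{itemize}
\item at least a fraction $\rho$ of the steps double $C$,
\item no step shrinks $C$ by more than the factor $\theta$,
\item or the excess shrinkage is compensated in a large-deviation sense.
\end{itemize}
This yields $C_{b_{n+1}}\ge r\cdot\lambda^{L+n}$ with some $\lambda>1$, except with probability at most $\alpha^{L+n}$, where $\alpha<1/2$ once $p\beta>3$ is used.

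We then set $r_{n+1}=r_n\lambda^{L+n}$; any increasing choice below this also works. The other error terms cover three things, and each is bounded by $\alpha^{L+n}$:
\begin{itemize}
\item the regime where $1/(k+j)$ is not yet in the asymptotic range, handled by choosing $r_1$ and $L$ large;
\item the additive rounding in $k$;
\item the discrepancy between the Riemann sum and $\log$.
\end{itemize}
This gives the factor $3$ in $1-3\alpha^{L+n}$.

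The main obstacle is the large-deviation step. We must show that the one-step tail $(\theta/(1+\theta))^{p\beta}$ with $p\beta>3$ is small enough that the rate per step beats $\log 2$. This is needed so that $\alpha<1/2$, which is required later to beat the $2^{L+n}$ words. I would first try the crude bound using only the two events "shrink below $\theta c$" and "double". I would optimise $\theta$ and $\rho$ explicitly to check that the threshold $3/p$ suffices. The natural computation is that each step fails to at least hold its value with probability about $2^{-p\beta}<1/8$, and $\binom{L+n}{\rho(L+n)}(1/8)^{(1-\rho)(L+n)}$ can be made below $\alpha^{L+n}$ with $\alpha<1/2$.
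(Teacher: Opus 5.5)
Your setup is sound. Given $C_m(\xi)=c$ the window is fresh, so $\PP(C_{m+1}(\xi)<k\mid C_m(\xi)=c)=\prod_{j=0}^{c-1}(1-qP(R\ge k+j))$ with $q\ge p$. This gives the uniformity in $\xi$, playing the role of the paper's coupling with $\bar 1$. Since $\sum_{j=0}^{c-1}\frac{1}{k+j}\ge\log\frac{k+c}{k}$, you even get $\PP(C_{m+1}<k\mid C_m=c)\le\bigl(\frac{k}{k+c}\bigr)^{p\beta}$ for $k$ in the asymptotic range. Hence $C_{m+1}/C_m$ dominates, up to rounding and small scales, a Beta prime$(p\beta,1)$ variable $Y$ with $P(Y\le t)=(t/(1+t))^{p\beta}$. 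Comparing the log-increments with i.i.d.\ copies of $\log Y$ was the right idea. You dropped it because of a sign error: $\PP(C_{m+1}\ge sc\mid C_m=c)\approx 1-(s/(1+s))^{p\beta}$, not $1-(1+s)^{-p\beta}$. With the correct tail, $E[(\log Y)^+]<\infty$ and $E[\log Y]>0$ for $p\beta>1$. Also, doubling has probability only about $1-(2/3)^{p\beta}\approx 0.70$ near the threshold, not $1-3^{-p\beta}$.

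The genuine gap is the step you call the main obstacle, which is the whole content of the proposition, and the route you sketch for it cannot work. A binomial bound on the number of steps with $C_{m+1}<C_m$ says nothing about how much $C$ shrinks on those steps, so it cannot give $C_{b_{n+1}}\ge\lambda^{L+n}r$. Demanding that no step shrinks by more than a fixed factor $\theta$ fails with probability roughly $(L+n)(\theta/(1+\theta))^{p\beta}$, which is not exponentially small.

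What is needed is a Chernoff bound on $\sum_m\log Y_m$ that uses the entire lower tail; your clause about compensating the shrinkage ``in a large-deviation sense'' leaves exactly this unproved. The decisive computation, missing from your proposal, is $E[Y^{-1}]=1/(p\beta-1)$, which is $<1/2$ exactly when $p\beta>3$; this is where $3/p$ comes from. There is no slack: $E[Y^{-t}]=\Gamma(p\beta-t)\Gamma(1+t)/\Gamma(p\beta)$ is minimized at $t=(p\beta-1)/2$, with minimum $1/2$ when $p\beta=3$. So by Cram\'er the probability of no growth over $N$ steps is $2^{-N+o(N)}$ at the threshold. Any fixed coarse discretization of $Y$, such as your two events, loses in the exponent and fails for $p\beta$ close to $3$.

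You must also show, with exponentially small error, that $C$ stays above an $n$-dependent baseline throughout each block. The comparison with $Y$ holds only while $C$ is large, and the lower-tail scales involved become arbitrarily small; choosing $r_1$ and $L$ large does not handle this by itself.

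The paper handles both issues as follows:
\begin{itemize}
\item It discretizes $Y$ finely ($\tilde Y(K)$: dyadic scales below $1/K$, mesh $1/K$ up to $K$) so that $E[\tilde Y^{-1}]<\tau<1/2$.
\item It applies Doob's maximal inequality to $e^{S_n}$ with $S_n=\sum_i(\tilde\mu-\log\tilde Y_i)$, which controls the final product and the running minimum at once.
\item It uses growing baselines $s_n$.
\end{itemize}
Its three error terms are a final product that is too small, a running product below $s_n$, and a single step at a scale below $2^{-(L+n)}/K$ where the comparison is unavailable. They are not the deterministic rounding and Riemann-sum corrections you list.
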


Finally, using all the elements developed throughout this section, we can now prove Item~\textit{2}.

\begin{proof}[\textbf{Proof of Item 2 of Theorem~\ref{theo:random_m}}]
  For any $k\in\mathbb{N}$, let
    \( \Xi_k:=\{0,1\}^k \)
denote the set of finite words with length $k$. Recall the sequence $\big(C_j(\xi)\big)_{j \geq 1}$  as defined in \eqref{eq: C}. Observe that $C_j(\xi)$ only depends on the first $j$ digits of $\xi$. With some abuse of notation, given $\eta \in \Xi_j$, $C_j(\eta)$ denotes the value of $C_j(\xi)$ where $\xi \in \Xi$ is any word such that $\eta_i = \xi_i, \forall i \in \{1, \dots, j\}$. Analogously, $\mathcal{A}_j(\eta)$  denotes the corresponding events $\mathcal{A}_j(\xi)$. 

Recall the sequence $(C^{\ast}_j)_{j \geq 1}$ given in \eqref{eq: Cast}. Note that, for any $n\geq 1$, there exists a non-empty set composed by finite words $\big\{\eta \in\Xi_{b_n} ~\colon~ C_{j}(\eta)=C_{j}^*, ~ \forall j\in\{1,...,b_n\}\big\}$. Arbitrarily select $\eta^n$ as one of these words. The word $\eta^n=(\eta^n_1,\eta^n_2, \dots, \eta^n_{b_n})$ is formed by digits that are always responsible for minimizing the following $C^*$, i.e., $\eta^n_j=1$ if $\max\{i+M_i\mathbbm{1}_{(X_i=1)}\text{ and }i\in(i^*_{j-1},i^*_j]\}<\max\{i+M_i\mathbbm{1}_{(X_i=0)}\text{ and }i\in(i^*_{j-1},i^*_j]\}$; $\eta^n_j=0$ when the opposite inequality occurs; $\eta^n_j$ is arbitrarily chosen if the equality holds. We assume that, $\eta^{n+1}$ is selected in a way that it agrees with $\eta^{n}$ in its first $b_n$ digits.

Since $\cap_{n \geq 1} \mathcal{A}_n(\eta^n) \subset \cap_{n \geq 1} \{C^\ast_n > 0\}$, it remains to show that $\PP\big(\cap_{n \geq 1} \mathcal{A}_n(\eta^n) \big) > 0$. 

For each $n \geq 1$, consider the set 
\[
\Xi^*_{b_{n+1}}:=\{(\xi_1,...,\xi_{b_{n+1}})\in \Xi_{b_{n+1}}:(\xi_1,...,\xi_{b_n})=\eta^n\}
\]
constituted of words of length $b_{n+1}$ that agree with $\eta^n$ in its first $b_n$ digits. In particular, recalling the definition of the sequence $(b_n)_{n \in \N_0}$ in \eqref{eq:def_bn}, we have 
\begin{equation*}
    |\Xi^*_{b_{n+1}}|=2^{b_{n+1}-b_n}=2^{L+n}.
\end{equation*}
Therefore, from Proposition~\ref{lemma:exp_word},Markov property, and union bound, the following inequality is derived
\begin{equation*}
    \PP \bigg( \mathcal{A}_{n+1}(\eta^{n+1}) ~\bigg\vert~ \bigcap_{k=1}^n\mathcal{A}_k(\eta^k) \bigg) 
    \geq 
    1 - \sum_{\xi \in \Xi_{b_n+1}^\ast} \PP \bigg( \mathcal{A}^c_{n+1}(\xi) \bigg\vert \bigcap_{k=1}^n\mathcal{A}_k(\eta^k) \bigg) \geq 1 - 3(2\alpha)^{L+n}. 
\end{equation*}
This upper bound yields the desired estimate for the probability $\PP\big(\cap_{n \geq 1} \mathcal{A}_n(\eta^n) \big)$. In fact, we have
\begin{equation*}
    \PP\big(\cap_{n \geq 1} \mathcal{A}_n(\eta^n) \big) = 
    \prod_{n = 1}^{\infty} \PP \bigg( \mathcal{A}_{n+1}(\eta^{n+1}) ~\bigg\vert~ \bigcap_{k=1}^n\mathcal{A}_k(\eta^k) \bigg) \PP\big(\mathcal{A}_1(\eta^1)\big) \geq 
    \PP\big(\mathcal{A}_1(\eta^1)\big)  \prod_{n = 1}^{\infty} [1 - 3(2\alpha)^{L+n} ].
\end{equation*}
Since $\PP\big(\mathcal{A}_1(\eta^1)\big) > 0$ and $\alpha < 1/2 ~ \Longrightarrow \prod_{n = 1}^{\infty} [1 - 3(2\alpha)^{L+n}  ] > 0$, the proof is concluded.  
\end{proof}

\section{Proofs for the non-decreasing case}\label{sec:nondecreasing}
For a shortened notation, let $\bar0 = (0,0,0, \dots)$ and $\bar1 = (1,1,1, \dots)$ denote the words consisting entirely of $0$'s and $1$'s, respectively.
\begin{lemma}\label{lem:xi_0xi_1}
    Suppose $\mathcal{M}$ is a non-decreasing sequence. Then,
    \[\{\bar 0, \bar 1\}\subset W_0\iff W_0=\Xi . \]
\end{lemma}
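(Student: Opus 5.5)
The reverse implication is immediate, since $\bar0,\bar1\in\Xi$. For the direct implication, I fix a realization of $(X_n)$ in which both constant words are seen from $0$. This gives two infinite paths $0=u_0<u_1<u_2<\cdots$ and $0=v_0<v_1<v_2<\cdots$. Every $u_i$ with $i\ge1$ has letter $0$, every $v_j$ with $j\ge1$ has letter $1$, and $u_{i}-u_{i-1}\le M_{u_{i-1}}$, $v_{j}-v_{j-1}\le M_{v_{j-1}}$. Because edges only point forward, paths are strictly increasing, so distinctness of vertices is automatic. Given an arbitrary $\xi\in\Xi$, I will build the path $w_0=0<w_1<w_2<\cdots$ greedily, taking each $w_k$ from $U=\{u_i\}$ if $\xi_k=0$ and from $V=\{v_j\}$ if $\xi_k=1$.

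The key tool is the monotonicity remark from the paper. If $x$ sees $y$, meaning $y-x\le M_x$, and $x\le x'<y$, then $y-x'\le y-x\le M_x\le M_{x'}$, so $x'$ also sees $y$. Equivalently, for any $x'$ with $x'\in[x,y)$ we have $x'+M_{x'}\ge x+M_x\ge y$.

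The construction and invariant are as follows. Given the current vertex $w=w_{k-1}$, define $w_k$ as the smallest element of the required set ($U$ or $V$) that is strictly greater than $w$. I need to show $w$ sees $w_k$, i.e.\ $w_k-w\le M_w$. Suppose $w_k=u_i$, with $u_{i-1}\le w<u_i$ by minimality; note $u_{i-1}$ exists because $u_0=0\le w$. Since $u_{i-1}$ sees $u_i$ and $w\in[u_{i-1},u_i)$, the monotonicity property gives that $w$ sees $u_i$. The case $\xi_k=1$ is identical with $V$. By induction this produces an infinite path reading $\xi$, so $\xi\in W_0$.

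The only point needing care is that the required next element always exists and lies in the interval $[u_{i-1},u_i)$ containing $w$. Existence follows because $U$ and $V$ are infinite, hence unbounded. The interval placement holds because $0$ belongs to both sequences, so for every $w\ge0$ there is a unique $i\ge1$ with $u_{i-1}\le w<u_i$. Thus I do not expect a genuine obstacle; the main subtlety is noting that $0$ is the common start of both paths, which anchors the induction at $w_0=0$.
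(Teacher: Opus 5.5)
Your proof is correct and follows essentially the paper's route. The paper first shows, by contradiction using monotonicity of $\mathcal{M}$, that every vertex sees both a $0$ and a $1$ within its range (its statement S), and then builds the path greedily. Your argument establishes the same property directly: the edge $(u_{i-1},u_i)$ of the $\bar 0$-path that straddles $w$ lets $w$ reach $u_i$. Your direct version is also slightly cleaner, because it avoids the paper's loose claim that no vertex before $v$ can see a $0$. What is really needed, and what your argument makes explicit, is that no such vertex can reach a $0$ beyond $v$.
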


\begin{proof}
    The implication $W_0=\Xi \implies \{ \bar 0 , \bar 1\}\subset W_0$ is straightforward. We now deal with the other implication. Suppose that $\{ \bar 0 , \bar 1\}\subset W_0$. We prove by contradiction that the following statement holds:
    \begin{itemize}
        \item (S) for every $v\in\mathbb{Z}_+$, there exists $v_0>v$ and $v_1>v$ such that $X_{v_0}=0$, $X_{v_1}=1$, and $\max\{v_0-v,v_1-v\}\leq M_v$.
    \end{itemize}
    
    In other words, statement S is equivalent to all vertices (including zero) being able to see at least one $0$ and one $1$. By a simple induction argument, one can note that S implies that any word $(\xi_1,\xi_2,...)\in \Xi$ is seen and so $W_0=\Xi$.

    Consider that S does not hold. Then, there exists some $v\in\mathbb{Z}_+$ that does not admit $v_0$ or $v_1$ that satisfies the conditions in S. We deal with the first case (no $v_0$) as the other one is analogous. Since $\mathcal{M}$ is non-decreasing, for any $0\leq v'\leq v$ it also does not exist $v_0$ such that $X_{v_0}=0$ and $v_0-v'\leq M_{v'}$, i.e., neither $v$ nor any previous vertex is able to see a 0. This causes the word $\bar 0$ not to be seen, which is a contradiction.
\end{proof}

\begin{proof}[\textbf{Proof of Theorem \ref{teo:least_probable}}]
The result that $P_{\bar\G}(W_0=\Xi)>0 \Rightarrow P_{\bar\G}(\xi^*\in W_0)>0 $ comes directly from the fact that $\{W_0=\Xi\}\subset \{\xi^*\in W_0\}$, so it only remains to show the other implication. 

Suppose $P_{\bar\G}(\xi^*\in W_0)>0$. By Proposition~\ref{prop:worst}, we have that both $P_{\bar\G}( \bar 0\in W_0)>0$ and $P_{\bar\G}(\bar 1\in W_0)>0$. Our strategy is to use this fact to prove that $P_{\bar\G}( \{\bar 0,\bar 1\}\subset W_0)>0$ and then conclude the proof by using Lemma \ref{lem:xi_0xi_1}.

For $i\in\{0,1\}$ and $v\in\mathbb{Z}_+$, let
\[A_i(v):=\{\text{$(i,i,...)$ is seen from vertex $v$ and $X_v=i$}\}\]
and
\[A_i:=\bigcup_{v\in \mathbb{Z}_+}A_i(v).\]
Note that
\[A_i\in \bigcap_{n\geq 1}\sigma(X_n,X_{n+1},...),\] 
and so $P_{\bar\G}(A_i)\in\{0,1\}$ by Kolmogorov 0-1 law. However, $P_{\bar\G}(A_i)>0$ as $P_{\bar\G}((i,i,...)\in W_0)>0$, so we conclude that $P_{\bar\G}(A_0)=P_{\bar\G}(A_1)=1$. 

Let $V_i$ denote the first vertex from which $(i,i,...)$ is seen and $X_{V_i}=i$. So, $\{V_0<\infty, V_1<\infty\}$ holds almost surely as it is the intersection of the almost sure events $A_0$ and $A_1$. Since $X_{V_0}=0\neq 1=X_{V_1}$ implies $V_0\neq V_1$, at least one of the following holds: $P_{\bar\G}(V_0<V_1<\infty)>0$ or $P_{\bar\G}(V_1<V_0<\infty)>0$. From now on, we continue the proof for the first case and the other one is analogous.

For $v_0<v_1$, we say that a configuration $(X_n)_{n \in \mathbb{Z}_+}$ is ($v_0,v_1$)-good if
\begin{itemize}
    \item $\bar 1$ is seen from $v_1$ and $X_{v_1}=1$;
    \item $X_{v_0}=0$;
    \item there exists $v>v_1$ such that $X_v=0$, $M_{v_0}\geq v-v_0$, and $\bar 0$ is seen from $v$ (and consequently from $v_0$).
\end{itemize}

In other words, in the definition above, we not only require $(i,i,...)$ to be seen from $v_i$ for $i\in\{0,1\}$, but also that $v_0$ does not need help from any other vertex in $\{v_0+1,...,v_1\}$ to see $\bar 0$. Consider the event
\[G(v_0,v_1):=\{(X_n)_{n \in \mathbb{Z}_+}\text{ is $(v_0,v_1)$-good}\}\]
and note that it only depends on $X_n$ for $n\in\{v_0\}\cup\{v_1,v_1+1,...\}$.

When $\{V_0<V_1<\infty\}$ holds, there is an infinite sequence $V_0=x_0,x_1,x_2,...$ of 0's that connect to each other, and so, $(X_n)_{n \in \mathbb{Z}_+}$ is $(v_0,v_1)$-good if we take $v_1=V_1$ and $v_0=x_i$ such that $x_i<V_1<x_{i+1}$. Therefore,
\[0<P_{\bar\G}(V_0<V_1<\infty) \leq P_{\bar\G}\Bigg(\bigcup_{1\leq v_0<v_1<\infty}G(v_0,v_1)\Bigg)\leq \sum_{1\leq v_0<v_1<\infty}P_{\bar\G}\big(G(v_0,v_1)\big)\]
and we conclude that there exists at least one pair $(v_0,v_1)$ such that $v_0<v_1$ and $P_{\bar\G}(G(v_0,v_1))>0$. From now on, consider $(v_0,v_1)$ to be such pair.

Let $E_0:=\{n\in\mathbb{N}:\text{ $n+v_0$ is an even number and } n<v_0\}$ and $E_1:=\{1,2,...,v_1-1\}\setminus(E_0\cup\{v_0\})$ be sets of indices and
\[E:=\{X_n=0\text{ for all $n\in E_0$ and $X_n=1$ for all $n\in E_1$}\}\]
be the event in which $X$ alternates between $0$ and $1$ before $v_0$ and then equals $1$ from $v_0+1$ until $v_1-1$.

Note that $P_{\bar\G}(E)>0$ and that $E$ and $G(v_0,v_1)$ are independent events as they rely on different coordinates of $(X_n)_{n \in \mathbb{Z}_+}$. So, $P_{\bar\G}(E\cap G(v_0,v_1))>0$ and it only remains to note that $E\cap G(v_0,v_1)$ implies that both $\bar 0$ and $\bar 1$ are seen to conclude the proof by Lemma \ref{lem:xi_0xi_1}. Indeed, for any $i\in\{0,1\}$, we have that $(i,i,i,...)$ is seen from $v_i$ and since $M_n\geq 2$ for all $n\in\mathbb{Z}_+$, it is possible to connect vertex $0$ all the way through vertex $v_i$ by using only the vertices in $E_i$ so that $(i,i,i,...)$ is also seen from vertex $0$.
\end{proof}

\begin{proof}[\textbf{Proof of Theorem~\ref{theo:mdeterministic}}] 

We consider only the case $p\geq 1/2$ and so $C=-[\log(p)]^{-1}$ as the case $p<1/2$ is analogous. By relying on Theorem \ref{teo:least_probable}, we only need to consider $\xi^*= \bar 0$ and evaluate $P_{\bar\G}(\xi^*\in W_0)$.

We define spacing random variables as done in \cite{grimmet_liggett_richthammer_2010}. Let $T_0=0$ and define recursively
\[T_{n+1}:=\min\{k> T_n:X_k=0\}, \hspace{.5cm} \tau_{n+1}:=T_{n+1}-T_n.\]

Note that $T_n=\sum_{i=1}^n\tau_{i}$ and that $\tau_i \stackrel{i.i.d.}{\sim}Geo(1-p)$, where $Geo$ denotes the geometric distribution with support on $\mathbb{N}=\{1,2,3,...\}$. We now argue that
\begin{equation}\label{eq:w(m)_subset}
\{\xi^*\in W_0\}= \bigcap_{n\geq 0}\{\tau_{n+1}\leq M_{T_n}\}.
\end{equation}

On the one hand, it is clear that $\{\xi^*\in W_0\}\supset \bigcap_{n\geq 0}\{\tau_{n+1}\leq M_{T_n}\}$. On the other hand, if there exists $n\in\mathbb{N}$ such that $\tau_{n+1}> M_{T_n}$, then the $(n+1)$-th element of $\xi^*$ is not within distance $M_{T_n}$ from $T_n$ (since $\mathcal{M}$ is non-decreasing, the $(i+n)$-th element is also not seen by any of the previous points $T_{n-1},T_{n-2},...,T_{0}$) and so $\xi^*$ is not seen.
\bigskip

\noindent\textit{Proof of Item 1}. By (\ref{eq:w(m)_subset}) and the fact that $\{W_0=\Xi\}\subset \{\xi^*\in W_0\} $, the proof is complete if we show that $P_{\bar\G}(\bigcap_{n\geq 0}\{\tau_{n+1}\leq M_{T_n}\})=0$. The main idea is to use the second Borel-Cantelli lemma to prove that $P_{\bar\G}(\tau_{n+1}>M_{T_n} \text{ infinitely often})=1$. However, some modifications are needed as we need a divergent sum of probabilities of independent events. Since $(T_n)_{n\geq 0}$ is not an independent sequence, we start by temporarily changing $T_n$ to the deterministic value $\lceil \frac{2}{1-p}\rceil n$.

Let $b=b(p):=\lceil \frac{2}{1-p}\rceil$. Since $M_{bn}\le C\log(bn)$ and $C\log(p)=-1$ for all $n \geq n_0$, we have that
\[
\sum_{n=1}^\infty P_{\bar\G}(\tau_{n+1}>M_{bn}) = \sum_{n= 1}^\infty p^{M_{bn}} \ge \sum_{n=n_0}^\infty p^{C\log(bn)}=b^{C\log(p)}\sum_{n= n_0}^\infty n^{C\log(p)}=+\infty.
\]
and so, by the Borel-Cantelli lemma as $(\tau_n)_{n\geq 1}$ is an independent sequence,
\begin{equation}\label{eq:tau_io}
P_{\bar\G}(\tau_{n+1}>M_{bn} \text{ infinitely often})=1.
\end{equation}

Recall that $T_n=\sum_{i=1}^n\tau_{i}$ with $\tau_i \stackrel{i.i.d.}{\sim}Geo(1-p)$ and $E(T_n)=\frac{n}{1-p}\leq \frac{bn}{2}$. So, we use exponentially decaying bounds for the tail of the sum of geometric variables (e.g. the Chernoff bounds found in \cite[Theorem 2.1]{tail_geo}) to conclude that there exists some $a=a(p)$ such that $P_{\bar\G}(T_n\geq bn)\leq P_{\bar\G}(T_n\geq 2E(T_n))\leq e^{-an}$ for any $n\in \mathbb{N}$. This implies that $\sum_{n=1}^\infty P_{\bar\G}(T_n\geq bn)<\infty$ and by the Borel-Cantelli lemma that
\begin{equation}\label{eq:t_io}
P_{\bar\G}\Bigg(\bigcup_{n\geq 0} \bigcap_{k\geq n}\{ T_k <bk\}\Bigg)=P_{\bar\G}\big(\{T_n \geq bn \text{ infinitely often}\}^c\big)=1.
\end{equation}

Now note that $\{\tau_{n+1}>M_{bn}\}\cap\{T_n <bn\}\subset \{\tau_{n+1}>M_{T_n}\}$ as $\mathcal{M}$ is non-decreasing. By (\ref{eq:tau_io}) and (\ref{eq:t_io}), $\{\tau_{n+1}>M_{bn}\}$ occurs infinitely often and $\{ T_n <bn\}$ occurs for all but some finitely many times with probability $1$, so their intersection occurs infinitely many times with probability $1$. Therefore,
\[
P_{\bar\G}(\tau_{n+1}>M_{T_n} \text{ infinitely often})=1,
\]
completing the proof of item 1. Moreover, note that the same exact proof also works when $M_n=C\log(n)$ for all sufficiently large \(n\), what yields the result claimed in Remark~\ref{remark:m_nondecreasing}.
\bigskip

\noindent \textit{Proof of item 2}. In order to be able to use Theorem \ref{teo:least_probable}, we first note that we may assume without loss of generality that $\mathcal{M}$ is non-decreasing. Indeed, for a general $\mathcal{M}$, define $\mathcal{M}'$ by $M_n'=\inf\{M_n,M_{n+1},...\}$. It is clear that $\mathcal{M}'$ is non-decreasing and that $\liminf_{n\to \infty} \frac{M_n}{\log(n)}=\liminf_{n\to \infty} \frac{M'_n}{\log(n)}$. Moreover, since $M'_n\le M_n$, a simple coupling argument shows that it is sufficient to prove that the event $\{W_0=\Xi\}$ has positive probability when considering the graph formed by $\mathcal{M}'$ to conclude the same for the one formed by $\mathcal{M}$.

By (\ref{eq:w(m)_subset}) and Theorem \ref{teo:least_probable}, the proof is complete if we show that $P_{\bar\G}(\bigcap_{n\geq 0}\{\tau_{n+1}\leq M_{T_n}\})>0$. Moreover, since $\mathcal{M}$ is non-decreasing and $T_n\geq n$ by definition, it is enough to prove that $P_{\bar\G}(\bigcap_{n\geq 0}\{\tau_{n+1}\leq M_{n}\})>0$. We have that
\begin{equation}\label{eq:series}
\sum_{n\geq 0}P_{\bar\G}(\tau_{n+1}> M_n)=\sum_{n\geq 0}p^{M_n}.
\end{equation}

The first step of the proof is to show that the series in (\ref{eq:series}) is finite. There exists $n_0\in\mathbb{N}$ such that $M_n= \nu\log(n)$ for $n\geq n_0$. Note that the following inequality follows from $\nu\log(p)<-1$:
\begin{equation}\label{eq:series_p_m}
\begin{aligned}\sum_{n\geq n_0}p^{M_n}&\leq \sum_{n\geq n_0}p^{\nu\log(n)}=\sum_{n\geq n_0}\exp[\log(p)\nu\log(n)]=\sum_{n\geq n_0}n^{\nu\log(p)}<\infty.
\end{aligned}\end{equation}

Thus, if we let $A_n:=\{\tau_{n+1}\leq M_n\}$, then
\[P_{\bar\G}\Bigg(\bigcup_{k= 0}^\infty \bigcap_{n= k}^\infty A_n\Bigg)=1\]
by (\ref{eq:series}), (\ref{eq:series_p_m}) and the Borel-Cantelli Lemma. So, there exists $k\in\mathbb{N}$ such that 
\begin{equation}\label{eq:an_n>k}
P_{\bar\G}\Bigg(\bigcap_{n=k}^\infty A_n\Bigg)>0.
\end{equation}

Since $k$ is finite and $M_n\geq 2$ for all $n\geq 0$, it is also easy to see that
\begin{equation}\label{eq:an_n<k}
P_{\bar\G}\Bigg(\bigcap_{n=0}^{k-1}A_n\Bigg)\geq P_{\bar\G}\Bigg(\bigcap_{n=0}^{k-1}\{\tau_{n+1}\leq 2\}\Bigg)>0
\end{equation}

Combining (\ref{eq:an_n>k}) and (\ref{eq:an_n<k}) with the fact that $A_n$ are independent results in
\[\begin{aligned}
P_{\bar\G}\Bigg(\bigcap_{n\geq 0}A_n\Bigg)&\geq P_{\bar\G}\Bigg(\bigcap_{n\geq 0}A_n\bigg|\bigcap_{n\geq k}A_n\Bigg)P_{\bar\G}\Bigg(\bigcap_{n\geq k}A_n\Bigg)\\
&=P_{\bar\G}\Bigg(\bigcap_{n=0}^{k-1}A_n\Bigg)P_{\bar\G}\Bigg(\bigcap_{n\geq k}A_n\Bigg)\\
&>0,
\end{aligned}\]
completing the proof.
\end{proof}

\section{Exponential growth of the clusters}\label{sec:lemmas}

In this section, we prove Proposition \ref{lemma:exp_word}. We first state an auxiliary proposition depending only on the random graph $\mathcal{G}$ and show that proving this proposition is sufficient to establish Proposition \ref{lemma:exp_word}.

Define the sequence $(i_n)_{n\ge 0}$ by setting $i_0:=0$, $i_1:=M_0$, and recursively for $n\geq 1$,
\[i_{n+1}:=\begin{cases}
    \max\{i+M_i:i\in (i_{n-1}, i_n]\},&\text{if  $i_n\neq i_{n-1}$};\\
    i_n,&\text{otherwise}.
\end{cases}\]

We also define, for $n\ge 1$,
\[C_n:= i_{n} - i_{n-1} .\]
Note that the definition of $C_n$ is similar to that of $C_n(\xi)$, but it does not depend on any specific word or $(X_n)_{n\ge 1}$, so it only depends on the random graph $\mathcal{G}$.

\begin{proposition}~\label{lemma:exp}
Given a parameter $p \in (0,1/2]$, suppose that the common distribution of the ranges $R$ satisfies
\[
\liminf_{n\to\infty} nP(R\geq n) > {3}.
\]
Then there exists a positive constant $\alpha < 1/2$ and parameters $(L,\Phi)$ such that
\begin{equation}\label{eq:prop_aux}
P(C_{b_{n+1}}\ge r_{n+1} \mid C_{b_n}=r ) \geq 1 - 3\alpha^{L+n},
\quad \text{for every $r\ge r_n$ and $n \geq 1$.}
\end{equation}
\end{proposition}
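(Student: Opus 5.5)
The plan is to study the one-step transition of the Markov chain $(C_k)_k$ and then chain $L+n$ steps together with a Chernoff-type bound. The labels $(X_n)$ play no role here, so $p$ is irrelevant and only $P$ matters.

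\textbf{One-step law.} Condition on $C_k=c$. The ranges $M_i$ for $i\in(i_{k-1},i_k]$ are fresh i.i.d.\ copies of $R$, independent of the past. The vertex $i_{k-1}+j$, $j\in\{1,\dots,c\}$, fails to reach beyond $i_k+m-1$ exactly when $M_{i_{k-1}+j}<c-j+m$. Hence
\[
P(C_{k+1}<m\mid C_k=c)=\prod_{j=0}^{c-1}P(R<m+j).
\]
Fix $\beta>3$ and $N_0$ with $P(R\ge n)\ge \beta/n$ for $n\ge N_0$. For $m\ge N_0$ this gives
\[
P(C_{k+1}<m\mid C_k=c)\le \exp\Big(-\beta\sum_{j=0}^{c-1}\frac1{m+j}\Big)\le \Big(\frac{m}{m+c}\Big)^{\beta}.
\]
Taking $m=\lambda c$, the conditional probability that $C_{k+1}<\lambda c$ is at most $(\lambda/(1+\lambda))^\beta$, uniformly in $c$, as long as $\lambda c\ge N_0$. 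In particular the block at least doubles with probability at least $1-(2/3)^{\beta}$. It shrinks by a factor $2^{-t}$ with probability at most roughly $2^{-\beta t}$.

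\textbf{Chaining $L+n$ steps.} Set $Y_k=\log_2(C_{k+1}/C_k)$. By the Markov property and the uniform bound above, the increments over the block of times $b_n,\dots,b_{n+1}-1$ are stochastically dominated from below by i.i.d.\ variables $\tilde Y$. These have $P(\tilde Y<y)\le (2^y/(1+2^y))^{\beta}$, valid as long as the chain stays above $N_0$. I would choose $r_n$ growing slowly, e.g.\ $r_{n+1}=2r_n$ with $r_1\ge N_0 2^{K}$, so that $r_{n+1}/r_n$ is a fixed constant. The event $\{C_{b_{n+1}}<r_{n+1}\}$ is then contained in the union of three bad events, which matches the factor $3$ in \eqref{eq:prop_aux}:
\begin{enumerate}
\item[(a)] some single step shrinks the block by a factor worse than $2^{-(L+n)}$ (this is also what keeps the chain above $N_0$);
\item[(b)] the sum $\sum Y_k$ over the $L+n$ steps falls below $1$ even though (a) fails;
\item[(c)] a residual event covering the finitely many small-$m$ corrections and discreteness effects.
\end{enumerate}
Event (a) has probability at most $(L+n)2^{-\beta(L+n)}$ by a union bound. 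Event (b) is handled by the exponential Chebyshev inequality:
\[
P\Big(\sum Y_k<1\Big)\le 2^{s}\,\big(E[2^{-s\tilde Y}]\big)^{L+n}.
\]
The moment $E[2^{-s\tilde Y}]$ is finite for $s<\beta$, and is close to $P(\tilde Y\text{ large})$ when the typical growth is large.

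\textbf{Main obstacle.} The delicate point is making the Chernoff base $\alpha'=E[2^{-s\tilde Y}]$ (times harmless polynomial factors) strictly below $1/2$. This is where the threshold $3$ must enter. Using only doubling, the crude estimate $E\lesssim 2^{-s}+2^{-\beta}K(s)$ is not good enough. I would therefore optimise jointly over the growth factor $\lambda$ (requiring $C_{k+1}\ge\lambda C_k$ on good steps) and over $s\in(1,\beta)$, computing $E[2^{-s\tilde Y}]\le \int_0^\infty s\ln 2\,2^{-sy}(2^{y}/(1+2^{y}))^{\beta}\,dy$ plus the contribution of the negative part. The aim is to show that this integral is below $1/2$ for some admissible $s$ exactly when $\beta>3$. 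If that calibration fails, the fallback is a counting argument: require at most a fixed fraction of the $L+n$ steps to be non-growth steps, and bound the probability of more via a binomial tail with parameter $(\lambda/(1+\lambda))^\beta$. Finally, $L$ is taken large to absorb the constants, and the bounds for (a), (b), (c) are each shown to be at most $\alpha^{L+n}$ for some $\alpha<1/2$.
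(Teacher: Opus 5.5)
Your one-step formula $P(C_{k+1}<m\mid C_k=c)=\prod_{j=0}^{c-1}P(R<m+j)\le (m/(m+c))^{\beta}$ for $m\ge N_0$ is correct. It gives a cleaner comparison with the Beta Prime$(\beta,1)$ variable $Y$ than the paper's route, and could replace both the reduction to \eqref{eq:dist_r} and Lemma~\ref{lemma:approx}. However, the two steps where the proof actually happens are missing.

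First, you never establish the estimate that uses $\beta>3$; you only state it as an aim and offer a fallback. The missing fact is that the exponent $s=1$ already works. For $s=1$, $E[2^{-\tilde Y}]$ is $E[C_k/C_{k+1}\mid C_k]$, which after truncation is at most $(1+\varepsilon)E[Y^{-1}]$. Moreover $E[Y^{-1}]=\beta B(\beta-1,2)=1/(\beta-1)$, which is below $1/2$ exactly when $\beta>3$ (Lemma~\ref{lem:beta_prime}). The paper feeds this into Doob's inequality for the submartingale $e^{S_n}$ to obtain Lemma~\ref{lemma:y>exp}. Without this computation nothing in your sketch produces $\alpha<1/2$. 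The binomial fallback cannot replace it, because counting non-growth steps gives no control on how much the block shrinks on those steps.

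Second, event (a) does not keep the chain above $N_0$, contrary to your claim. Bounding each single-step shrinkage by $2^{-(L+n)}$ does not prevent moderate shrinkages from compounding over $L+n$ steps. Yet your bound $(L+n)2^{-\beta(L+n)}$ for (a), and the domination you use in (b), both require $C_k\ge N_0 2^{L+n}$ at every step of the block. With $r_{n+1}=2r_n$ and $r_1$ fixed, the room between $r_n$ and $N_0 2^{L+n}$ is only $\log_2(r_1/N_0)-L-1$ halvings, independent of $n$. For $R$ with $nP(R\ge n)\to\beta$, the probability of a dip that large during the block therefore stays bounded away from $0$, so it cannot be absorbed into $\alpha^{L+n}$. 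Your event (c) is never defined or bounded, so it cannot cover this either.

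What is needed is a running-minimum estimate with rate below $1/2$, together with an admissible dip that grows linearly in $L+n$. In the paper this is the second inequality of Lemma~\ref{lemma:y>exp} and the $\Gamma_k$ term. The levels are $r_n=Se^{(\tilde\mu-\theta)(b_n-1)}$ and the baselines are $s_n=r_n e^{-\theta(L+n)}$. They are chosen so that staying above $s_n$ also makes the single-step collapse bound at scale $2^{-(L+n)}$ valid (the $\Omega_k$ term, which plays the role of your (a)).
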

\begin{proof}[\textbf{Proof of Proposition \ref{lemma:exp_word}}] Consider $R$ such that $\liminf_{n\to\infty} nP(R\geq n) > {3}/p$.
For each $n \in \Z_{+}$, let $\bar M_n = \mathbbm{1}_{\{X_n = 1\}}M_n$ and consider $\bar{R}$ to be its representative range distribution.
Since $P(\bar R\ge n)=pP(R\ge n)$, a simple coupling argument shows that (\ref{eq:prop_aux}) 
implies 
\[
\PP(\mathcal{A}_{n+1}(\bar1) \mid C_{b_n}(\bar{1})=r ) \geq 1 - 3\alpha^{L+n},
\quad \text{for every $r\ge r_n$ and $n \geq 1$,}
\]
for some $\alpha<1/2$ and parameters $(L,\Phi)$. Therefore, Proposition~\ref{lemma:exp_word} is proved for the word $\bar1$. We now extend this result to a general word $\xi\in \Xi$.

Note that both $(C_n(\xi))_{n\ge 1}$ and $(C_n(\bar 1))_{n\ge 1}$ are Markov Chains. For any $n\in\mathbb{N}$ and given $C_n(\xi)\le C_n(\bar1)$, we can create a coupling such that $M_{i_n(\xi)-k}= M_{i_n(\bar1)-k}$ and $\mathbbm{1}_{(X_{i_n(\xi)-k}=\xi_n)}\ge\mathbbm{1}_{(X_{i_n(\bar1)-k}=0)}$ for every $k\in\{0,1,...,C_n(\bar1)-1\}$, which implies $C_{n+1}(\xi)\le C_{n+1}(\bar1)$. An inductive coupling argument shows that
\[P(\mathcal{A}_{n+1}^c(\xi)|C_{b_n}(\xi)=r)\le P(\mathcal{A}_{n+1}^c(\bar1)|C_{b_n}(\bar1)=r), \quad\text{for every } n,r\in\mathbb{N}.\]
\end{proof}

Now we prove the auxiliary proposition regarding $\mathcal{G}$. As the proof is long and involves several details, some lemmas are stated throughout this section, with their proofs postponed to Section~\ref{subsec:lemmas}.

\begin{proof}[\textbf{Proof of Proposition \ref{lemma:exp}}]

Consider the following distribution
\begin{equation}\label{eq:dist_r}
P(R\geq n)=\begin{cases}
1,&\text{if }n\leq 0\\
1-e^{-\beta/T},& \text{if } 0<n<T;\\
    1-e^{-\beta/n},&\text{if }n\geq T.
\end{cases}
\end{equation}

We can prove Proposition \ref{lemma:exp} only for distributions satisfying $(\ref{eq:dist_r})$ for $\beta>3$ and $T\in\mathbb{N}$, because a coupling argument extends this result to general distributions. Indeed, considering $(\ref{eq:dist_r})$, we have $\lim_{n\to\infty}nP(R\geq n)=\beta$. For any general $R'$ such that $\liminf_{n\to \infty}n P(R'\geq n)=\beta'>3$, we can select $\beta\in(3,\beta')$ and $T$ large enough so that $P(R'\ge n)\ge P(R\ge n)$ for $n\ge T$. This automatically implies $P(R'\ge n)\ge P(R'\ge T)\ge P(R\ge T)=P(R\ge n)$ also for $n\in \{1,2,...,T\}$ and, therefore, $R'\succeq R$.

Now we fix $\beta>3$ and $T\in\mathbb{N}$ and proceed with the proof considering that \((M_i)_{i\in\mathbb{Z}_+}\) is an i.i.d. sequence whose distribution satisfies $(\ref{eq:dist_r})$. Let $Y_1:=C_1$ and, for $n\in\{2,3,...\}$, define
\[Y_n:=\begin{cases}\frac{C_n}{C_{n-1}}, &\text{ if } Y_{n-1}\neq 0;\\
0, &\text{ if } Y_{n-1}=0.
\end{cases}\]

For any $t \in(0, \infty)$, $n\in\{2,3,...\}$, and $s\in\mathbb{N}$, we have that
\begin{equation}\label{eq:cumul_yn}
\begin{aligned}
P(Y_{n}\leq t|C_{n-1}=s)&=P(C_{n}\leq \lfloor st\rfloor |C_{n-1}=s)\\
&=\prod_{j=0}^{s-1}P(R_j<\lfloor st\rfloor+1+j)\\
&=\exp\bigg\{-\beta\sum_{j=f(s,t,T)}^{s-1}\frac{1}{\lfloor st\rfloor+1+j}-\frac{\beta f(s,t,T)}{T}\bigg\},\end{aligned}\end{equation}
where $f(s,t,T):=\max\{0, T-1-\lfloor st \rfloor\}$.

Denoting
\[r(s, t) := \exp\Bigg\{-\beta \Bigg[\sum_{j=0}^{s-1}\frac{1}{\lfloor st \rfloor+1+j}-\int_0^s\frac{1}{\lfloor st \rfloor+1+x}dx\Bigg]\Bigg\},\]
fixing $t>0$ and taking $s$ sufficiently large so that $f(s,t,T)=0$, our expression for $P(Y_n \le t|C_{n-1}= s)$ can be written as
\[P(Y_n \le t|C_{n-1}= s) = \exp\Bigg\{-\beta \int_0^s\frac{1}{\lfloor st \rfloor+1+x}dx \Bigg\} r(s,t)=\bigg(\frac{\lfloor st \rfloor+1}{\lfloor st \rfloor+1+s}\bigg)^\beta r(s,t).\]
Taking the limit when $s\to \infty$ in the equation above, it holds for any $t\in(0,\infty)$ that
\begin{equation}\label{eq:conv_w}
\lim_{s\to \infty} P(Y_n \le t|C_{n-1}= s)=\lim_{s\to\infty}\Bigg[\bigg(\frac{\lfloor st \rfloor+1}{\lfloor st \rfloor+1+s}\bigg)^\beta r(s,t)\Bigg]=\bigg(\frac{t}{t+1}\bigg)^\beta.\end{equation}

Thus, as $s$ goes to infinity, the conditional distribution of $Y_n|C_{n-1}=s$ converges weakly to a random variable Y with distribution function given by

\begin{equation}\label{eq:cumul_beta_prime}
P(Y\leq t)=\begin{cases}
0,&\text{ if }t\leq 0;\\
(\frac{t}{t+1})^\beta,&\text{ otherwise.}
\end{cases}
\end{equation}

The random variable $Y$ has a known distribution: Beta Prime with parameters $\beta$ and $1$. Given any $t \ge 0$, observe that $P(\log Y \ge t) = P(Y \ge e^{t}) = 1- [e^t/(1 + e^t)]^\beta$ and $P(\log Y \le-t) = P(Y \le e^{-t}) = [e^{-t}/(1 + e^{-t})]^\beta = 1/(1 + e^t)^\beta$. Thus, $P(\log Y \ge t) = P(\log Y \le -t)$ if
$\beta = 1$, which implies that $E[\log Y ] = 0$, as $\log Y$ is integrable. Since $P(\log Y \ge t)$ is an increasing function in $\beta$ whilst $P(\log Y \le -t)$ is a decreasing one, we obtain
\[E[\log Y]\begin{cases}
    <0, \quad\text{ if }\beta<1,\\
    >0,  \quad\text{ if }\beta>1.
\end{cases}\]

The general idea of the proof is to use the identity $C_n=\prod_{i=1}^n Y_i$ together with the weak convergence in \eqref{eq:conv_w} to dominate $C_n$ by a multiplication of i.i.d. random variables.

For any $K\in\mathbb{N}$, define the set $A=A(K):=\{1/2^j:j\in\mathbb{N}\}\cup\{1,2,...,K^2\}$, the sequence $(a_j)_{j\in A}$ by $a_j=a_j(K)=j/K$,
and the sequence of disjoint intervals $(I_j)_{j\in A}$ by
\[I_j=I_j(K):=\begin{cases}
(a_{j},a_{2j}],&\text{ if } j\in \{1/2^j:j\in\mathbb{N}\};\\
(a_{j},a_{j+1}],&\text{ if } j\in \{1,...,K^2-1\};\\
(a_{K^2},+\infty),&\text{ if } j=K^2.
\end{cases}\]

Let the random variable $\tilde{Y}$ be defined as
\[\tilde{Y}=\tilde{Y}(K):=\sum_{j\in A} a_j\mathbbm{1}_{(Y\in I_j)}.\]

In other words, $\tilde{Y}$ is a discrete approximation of $Y$ such that $\tilde{Y}\leq Y$ and $P(\tilde{Y}\neq 0)=1$ for every $K\in \mathbb{N}$. Moreover, $\tilde{Y}(K)$ converges almost surely to $Y$ as $K\to \infty$.

Let $(\tilde{Y}_i)_{i\ge 1}$ be a sequence of i.i.d. copies of $\tilde{Y}$ and define
\[\tilde{\mu}=\tilde{\mu}_K:=E(\log(\tilde{Y})).\] The next Lemma helps us to state a value for $K$ that gives a good approximation in order to recover some Chernoff bounds results from $Y$.

\begin{lemma}\label{lemma:y>exp}
For any $\beta>3$, there exist $\tau=\tau(\beta)\in(0,1/2)$, $K\in\mathbb{N}$ and $\theta\in(0,\tilde{\mu})$ such that for all $n\in\mathbb{N}$,
\begin{equation}\label{eq:lemma_y>exp_1}
P\bigg(\prod_{i=1}^n \tilde{Y}_i\leq e^{(\tilde{\mu}-\theta) n}\bigg)\leq \tau^n\end{equation}
and
\begin{equation}\label{eq:lemma_y>exp_2}
P\bigg(\prod_{i=1}^j \tilde{Y}_i\leq e^{-\theta n} \text{ for some }j\in\{1,2,...,n\}\bigg)\leq \tau^n.\end{equation}
\end{lemma}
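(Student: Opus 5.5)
The plan is to pass to logarithms. Set $S_j:=\sum_{i=1}^j\log\tilde Y_i$, which is a random walk with i.i.d. increments of mean $\tilde\mu$. Both bounds are then Chernoff-type estimates. We use negative exponential moments $E[\tilde Y^{-\lambda}]$ because only lower deviations of $S_j$ matter. The relevant moments of the limit variable $Y$, which is Beta Prime$(\beta,1)$, are explicit:
\[
E[Y^{-\lambda}]=\frac{\Gamma(\beta-\lambda)\,\Gamma(1+\lambda)}{\Gamma(\beta)},\qquad 0\le\lambda<\beta .
\]
Two consequences are $E[Y^{-1}]=1/(\beta-1)$ and $E[Y^{-(\beta-1)}]=1$. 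Also, $E[\log Y]=\psi(\beta)-\psi(1)$, which exceeds $1+\tfrac12$ when $\beta>3$.

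\textbf{Step 1 (approximation).} First I check that $\tilde Y\ge Y/2$ pointwise. On $I_j$ with $j=1/2^m$ we have $Y\le a_{2j}=2a_j$. On $I_j$ with $j\in\{1,\dots,K^2-1\}$ we have $Y\le (j+1)/K\le 2j/K$. On $I_{K^2}$, note that $\tilde Y=K$, so there the bound $\tilde Y\ge Y/2$ fails. Since $\tilde Y\le Y$ everywhere and $\tilde Y\ge \min\{K, Y/2\}\ge Y/2\wedge 1$ once $K\ge1$, we get $\tilde Y^{-\lambda}\le 2^{\lambda}Y^{-\lambda}+1$, which is integrable for $\lambda<\beta$. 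Combined with $\tilde Y(K)\to Y$ a.s., dominated convergence gives $E[\tilde Y^{-\lambda}]\to E[Y^{-\lambda}]$ for each fixed $\lambda<\beta$. For $\tilde\mu$ the domination is two-sided: $|\log\tilde Y|\le |\log Y|+\log 2$, since $\tilde Y\le Y$ controls it from above and $\tilde Y\ge (Y/2)\wedge1$ from below. Hence $\tilde\mu_K\to\mu=E\log Y>3/2$.

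\textbf{Step 2 (choice of constants).} Because $\beta>3$, I can fix $\lambda\in(2,\beta-1)$. Since $\lambda\mapsto\log E[Y^{-\lambda}]$ is convex and vanishes at $\lambda=0$ and $\lambda=\beta-1$, we get $E[Y^{-\lambda}]<1$. Now choose $K$ large so that three conditions hold: $\rho_1:=E[\tilde Y^{-1}]<1/2$ (the limit is $1/(\beta-1)<1/2$), $\rho_\lambda:=E[\tilde Y^{-\lambda}]<1$, and $\tilde\mu>1$. Next choose $\theta\in(0,\tilde\mu)$ close enough to $\tilde\mu$ that two conditions hold: $\theta>\log 2/\lambda$, which is possible since $\log2/\lambda<1/2<\tilde\mu$, and $\rho_1e^{\tilde\mu-\theta}<1/2$. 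Finally set $\tau:=\max\{\rho_1e^{\tilde\mu-\theta},\,e^{-\lambda\theta}\}$, which lies in $(0,1/2)$.

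\textbf{Step 3 (the two bounds).} For \eqref{eq:lemma_y>exp_1}, apply Markov's inequality to $\prod_{i=1}^n\tilde Y_i^{-1}$:
\[
P\Big(\prod_{i=1}^n\tilde Y_i\le e^{(\tilde\mu-\theta)n}\Big)\le e^{(\tilde\mu-\theta)n}\rho_1^n\le\tau^n .
\]
For \eqref{eq:lemma_y>exp_2}, the process $Z_j:=\prod_{i=1}^j\tilde Y_i^{-\lambda}$ is a nonnegative supermartingale with $Z_0=1$, because $\rho_\lambda<1$. The event in question is $\{\max_{j\le n}Z_j\ge e^{\lambda\theta n}\}$. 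By Doob's maximal inequality (Ville's inequality), its probability is at most $e^{-\lambda\theta n}\le\tau^n$.

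\textbf{Main obstacle.} The delicate point is the simultaneous choice of constants in Step 2. The target rate is $\tau<1/2$, and not merely some $\tau<1$, because it must later beat the $2^{L+n}$ words. A single exponent does not obviously serve both bounds. That is why I use $\lambda=1$ for the first bound, which exploits $1/(\beta-1)<1/2$, and a large $\lambda>2$ for the maximal bound. The latter needs $\beta-1>2$ and $\mu>\log2/2$; this is exactly where the hypothesis $\beta>3$ enters.
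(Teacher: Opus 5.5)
Your proof is correct, but it is organized differently from the paper's. The paper uses a single exponent. It sets $S_j=\sum_{i\le j}(\tilde\mu-\log\tilde Y_i)$ and applies Doob's maximal inequality to the nonnegative submartingale $e^{S_j}=e^{j\tilde\mu}\prod_{i\le j}\tilde Y_i^{-1}$. This gives $P(\max_{j\le n}S_j\ge\theta n)\le[e^{\tilde\mu-\theta}E(\tilde Y^{-1})]^n$. The paper then notes that both events of the lemma lie inside $\{\max_{j\le n}S_j\ge\theta n\}$:
\begin{itemize}
\item the first event is exactly $\{S_n\ge\theta n\}$;
\item for the second, $\tilde\mu>0$ gives $e^{-S_j}=e^{-j\tilde\mu}\prod_{i\le j}\tilde Y_i\le e^{-\theta n}$.
\end{itemize}
So the only inputs are $E(Y^{-1})=1/(\beta-1)<1/2$ and $\tilde\mu_K\to\mu>0$, with $\theta$ taken close to $\tilde\mu$.

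You instead handle the two bounds separately: Markov's inequality for the endpoint, and Ville's inequality with a second exponent $\lambda\in(2,\beta-1)$ for the running minimum. This costs you the Gamma formula for $E[Y^{-\lambda}]$, strict log-convexity, and the digamma value of $\mu$, none of which the paper needs. On the other hand, your domination $\tilde Y\ge (Y/2)\wedge 1$ is cleaner than the paper's device of restricting to $K=2^j$ in order to use $\tilde Y(1)\le\tilde Y(2^j)$, and it works for every $K$.

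One correction to your closing remark: a single exponent does serve both bounds, even within your own scheme. Since $\rho_1<1$, the product $\prod_{i\le j}\tilde Y_i^{-1}$ is already a nonnegative supermartingale. Since $\tilde\mu>1>\log 2$, you can take $\lambda=1$ and $\theta\in(\log 2,\tilde\mu)$ close to $\tilde\mu$ in the Ville step. The hypothesis $\beta>3$ therefore enters only through $E(Y^{-1})<1/2$ in the first bound, not through any need for $\lambda>2$.
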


Recall that we are dealing with the sequence $b_n$ depending on $L$ which is yet to be defined. We can consider that $(b_n)_{n\ge 0}$ defines a sequence of boxes, where the $n$-th box starts at $b_{n-1}+1$ and ends at $b_n$.

Since $C_n=\prod_{i=1}^n Y_i$, inequality (\ref{eq:lemma_y>exp_1}) is directly related to the exponential growth of $(C_j)_{j\ge1}$. Since $Y_{j+1}|C_j=s$ converges weakly to $Y$ (which, in turn, will be approximated by $\tilde{Y})$ as $s$ goes to infinity, (\ref{eq:lemma_y>exp_2}) helps to establish a baseline $s$ for which $C_j$ must always remain above to ensure a good approximation between $Y_{j+1}$ and $\tilde{Y}$.

Note, however, that maintaining the same baseline $s$ for the whole process is not enough. For any fixed $s$, then $P(Y_n=0|C_{n-1}=s)>0$, which would cause the chain to eventually enter in the absorbing state $0$ and ruin the approximation. Therefore, this baseline cannot be uniform and must depend on $n$.

Figure \ref{fig:c_n} is a pictoral representation of the process. Since the vertical axis is in logarithmic scale, we can note that both $(C_n)_{n\ge1}$ and $(s_n)_{n\ge1}$ grow at least exponentially. The sizes of the boxes, however, grow linearly as $b_{n+1}-b_n=L+n$. Moreover, note that, in each box, the collective ranges are always above the respective baseline, which is crucial for maintaining good control over the approximation between $Y_j$ and $\tilde{Y}$, and for ensuring that the approximation improves (as it holds for more intervals $I_j(K)$) as $n$ increases.

\begin{figure}[h]
\centering
\includegraphics[width=15cm]{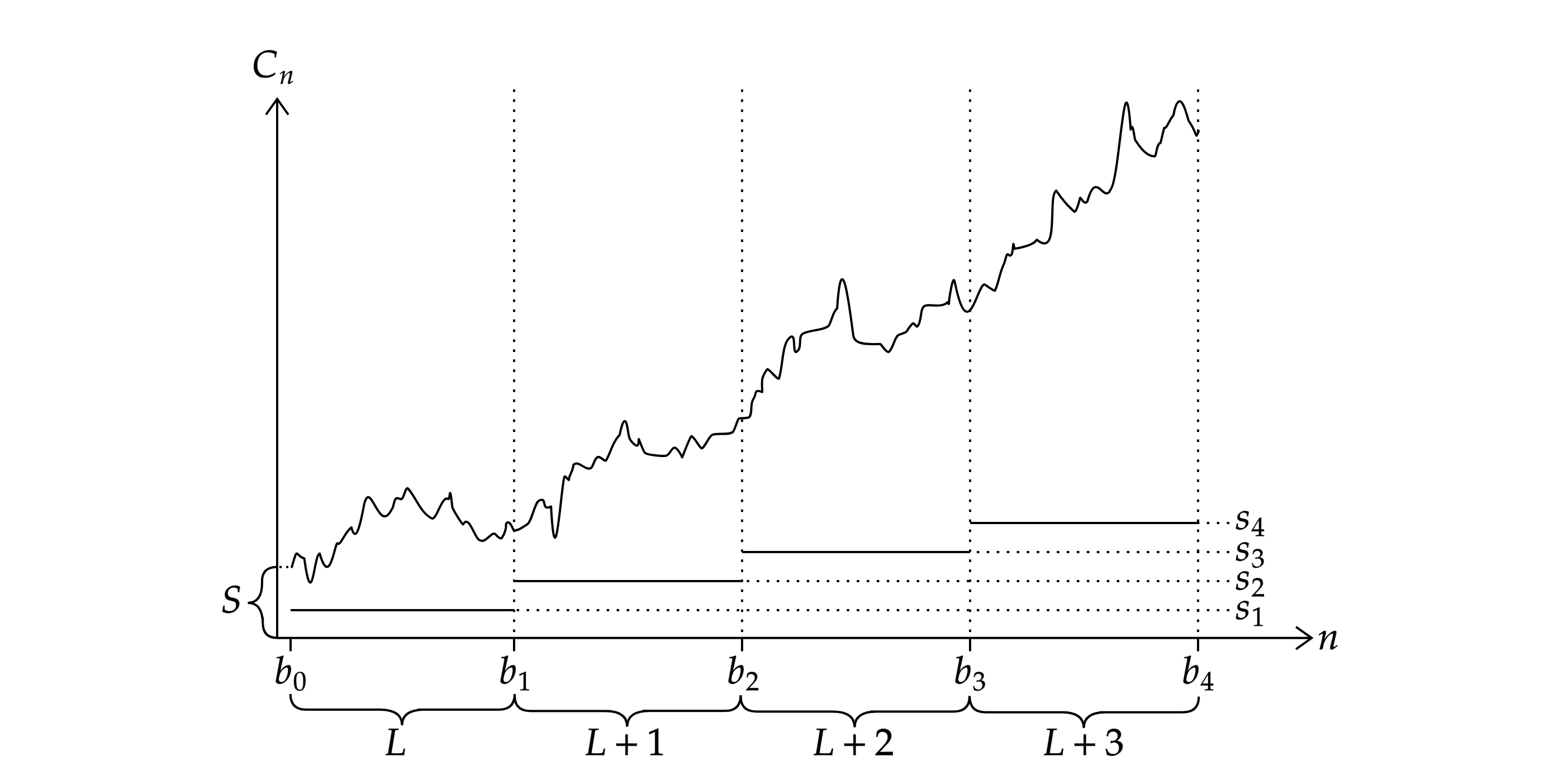}
\caption{Pictoral representation of the growth of the collective ranges. Vertical axis is in logarithmic scale.}\label{fig:c_n}
\end{figure}

We define the sequence of baselines $(s_n)_{n\ge1}$ by setting
\begin{equation}
\label{eq:def_sn}s_1:=Se^{-\theta L} \qquad\text{and} \qquad s_n:=Se^{-\theta (L+n-1)}\prod_{j=0}^{n-2}e^{(\tilde{\mu}-\theta)(L+j)} \text{ for }n\geq 2. \end{equation}

Equivalently, $(s_n)_{n\ge1}$ is the sequence that satisfies the recursive formula $s_{n+1}=s_ne^{(\tilde{\mu}-\theta)(L+n-1)-\theta}$ with $s_1=Se^{-\theta L}$. By adjusting $S$ and $L$, we can control both the inital value $s_1$ as well as the rate at which the baselines increase. The next Lemma characterizes the set of intervals in $\cup_{j\in A(K)}I_j$ for which $Y_j$ is well approximated by $\tilde{Y}$, depending on these baselines.

\begin{lemma}\label{lemma:approx}
    For any $\beta>3$, $\tau\in(0,1/2)$, $K\in\mathbb{N}$, $\theta\in(0,\tilde{\mu})$ and $\delta\in(1,1/\tau)$, there exist $L\in\mathbb{N}$ and $S\in\mathbb{N}$ such that the following items hold:
    \begin{itemize}
        \item[(I)] There exists a sequence $(d_n)_{n\in\mathbb{N}}$ such that for any $n\in\mathbb{N}$:
        \begin{equation}\label{eq:lema_dn}
            d_n\leq \tau^{L+n},\end{equation}
        and for any $s\in\mathbb{N}$ such that $ s\geq s_n$,
        \begin{equation}\label{eq:lema_dn2}
        P\Big(Y_2\leq \frac{1}{K2^{L+n}}\Big|C_1=s\Big)\leq d_{n}\end{equation}
        \item[(II)] For any $s\in\mathbb{N}$ and $j\in A(K)$ such that $s\geq s_n$ and $j\geq (\frac{1}{2})^{L+n}$
    \begin{equation}\label{eq:lema(ii)}
   P(Y_2\in I_{j}|C_1= s)\leq \delta P(\tilde{Y}=a_{j}).\end{equation}
    \end{itemize}
\end{lemma}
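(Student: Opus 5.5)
The plan is to work directly from the explicit formula \eqref{eq:cumul_yn}. For $s\ge s_n$ it gives the conditional law of $Y_2$ given $C_1=s$, and we compare it with the limit law \eqref{eq:cumul_beta_prime} through the bound $\tilde Y\le Y$. The first step is to make the convergence in \eqref{eq:conv_w} quantitative and, crucially, uniform in $t$ on suitable ranges. For $t\ge t_0>0$ and $s$ large, so that $f(s,t,T)=0$, we have $P(Y_2\le t\mid C_1=s)=\big(\frac{\lfloor st\rfloor+1}{\lfloor st\rfloor+1+s}\big)^\beta r(s,t)$. A Riemann-sum comparison shows
\[
0\le \sum_{j=0}^{s-1}\frac{1}{\lfloor st\rfloor+1+j}-\int_0^s\frac{dx}{\lfloor st\rfloor+1+x}\le \frac{1}{\lfloor st\rfloor+1},
\]
so $e^{-\beta/(\lfloor st\rfloor+1)}\le r(s,t)\le 1$. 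In addition, $\frac{\lfloor st\rfloor+1}{\lfloor st\rfloor+1+s}$ differs from $\frac{t}{t+1}$ by at most $O(1/s)$. The upshot is a uniform estimate $|P(Y_2\le t\mid C_1=s)-(t/(t+1))^\beta|\le c_\beta/(st)$ for $st\ge T$.

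Item (I) comes next. Put $t=1/(K2^{L+n})$. Since $s_n$ grows superexponentially in $n$ (by \eqref{eq:def_sn}, $\log s_n\gtrsim(\tilde\mu-\theta)\sum_{j<n-1}(L+j)$) while $1/t$ grows only exponentially, we can choose $S$ large enough that $s_n t\ge T$ and $s_nt\to\infty$ for all $n$. In that regime
\[
P(Y_2\le t\mid C_1=s)\le \Big(\frac{st+1}{st+1+s}\Big)^\beta\le (2t)^\beta,
\]
which can be checked directly from the exact product in \eqref{eq:cumul_yn}, because each factor is at most $1$ and the product is dominated by the integral expression. We then set $d_n:=(2/(K2^{L+n}))^\beta$. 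Because $\beta>3>1$, we get $(1/2)^{\beta(L+n)}\le \tau^{L+n}$ as soon as $2^{-\beta}\le\tau$. If $\tau$ is smaller than this, we use the factor $(2/K)^\beta$ together with a large $L$: the ratio $2^{-\beta}/\tau$ still needs care, so we first try to choose $\tau\ge 2^{-\beta}$, which is compatible with $\tau<1/2$ since $\beta>3$. If the given $\tau$ is forced to be smaller, we instead sharpen the threshold to $t=\tau^{(L+n)/\beta}$ inside the argument. This is the point where the statement's quantifiers are tight, and I would check how $\tau$ from Lemma~\ref{lemma:y>exp} relates to $\beta$.

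For item (II), observe that $P(\tilde Y=a_j)=P(Y\in I_j)$. We also need $P(Y\in I_j)>0$ bounded below on the relevant finite family: for $j\ge 2^{-(L+n)}$, each $I_j$ has length comparable to $a_j$, and $P(Y\in I_j)\gtrsim a_j^\beta$ for small $a_j$. The uniform error $c_\beta/(s a_j)$ for the two endpoints then gives
\[
P(Y_2\in I_j\mid C_1=s)\le P(Y\in I_j)+2c_\beta/(s_n a_j),
\]
and the error term is at most $(\delta-1)P(Y\in I_j)$ provided $s_n\ge C' 2^{(L+n)(\beta+1)}/(\delta-1)$. This again holds for all $n$ once $S$ and $L$ are large, by the superexponential growth of $s_n$. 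For the unbounded interval $I_{K^2}$ only the left endpoint matters, so that case is easier.

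The main obstacle is the uniformity in $n$. Each error term must be beaten simultaneously by the single choice of $S$, and that works only because $\log s_n$ grows quadratically in $n$ while the required thresholds grow linearly in $n$ in the exponent. The careful step will be verifying the explicit error bound for $r(s,t)$ and the floor effects when $st$ is only moderately large.
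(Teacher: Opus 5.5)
Your proof is correct in substance, for the values of $\tau$ where the lemma can hold (see below), but it is organized differently from the paper's. The paper uses two-sided \emph{multiplicative} bounds from harmonic-number estimates, $\big(\frac{\lfloor ts\rfloor}{\lfloor ts\rfloor+s}\big)^\beta\le P(Y_2\le t\mid C_1=s)\le\big(\frac{t}{t+1}\big)^\beta e^{\beta/(2\lfloor ts\rfloor)}$, and takes $d_n=c\,K^{-\beta}2^{-\beta(L+n)}$. For (II) it splits $A(K)$ into two parts. The finitely many $j\ge 2^{-L}$ are handled by the weak convergence \eqref{eq:conv_w} above a threshold $s^*(L)$. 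The dyadic $j\in[2^{-(L+n)},2^{-L}]$ are handled by the explicit bounds with a tuned constant $c=\delta-\delta(\tfrac12+\zeta)^\beta+(\tfrac12-\zeta)^\beta>1$. This only requires $s_n\gtrsim K^2 4^{L+n}$, which follows from $s_{n+1}>4s_n$.

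You instead use a single \emph{additive} error of order $\beta/(st)$. It does hold with $c_\beta=2\beta$: your Riemann-sum bound gives $1-r(s,t)\le\beta/(st)$, and with $m=\lfloor st\rfloor+1$ one has $|\frac{m}{m+s}-\frac{t}{t+1}|\le\frac{1}{s(t+1)^2}\le\frac{1}{st}$. You combine it with $P(Y\in I_j)\ge c_0(\beta,K)\,a_j^\beta$. This treats every $j\in A(K)$ at once, with no $\zeta$-tuning and no appeal to weak convergence. The price is the stronger requirement $s_n\gtrsim(K2^{L+n})^{\beta+1}/(\delta-1)$. That is harmless, because $\tilde\mu-\theta>0$ makes $\log s_n$ quadratic in $n$, so one large $S$ works for all $n$.

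On the point you flagged, discard the fallback of moving the cutoff to $\tau^{(L+n)/\beta}$. The cutoff $1/(K2^{L+n})$ is part of the statement and is built into $\Lambda_n$ and $\Omega_k$ in the proof of Proposition~\ref{lemma:exp}. Moreover, item (I) is simply false when $\tau<2^{-\beta}$. Letting $s\to\infty$ gives $\sup_{s\ge s_n}P\big(Y_2\le\frac{1}{K2^{L+n}}\mid C_1=s\big)\ge(K2^{L+n}+1)^{-\beta}$, which exceeds $\tau^{L+n}$ for large $n$, whatever $L$ and $S$ are.

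The paper's proof also uses $2^\beta\tau>1$ without comment. It holds in the application because Lemma~\ref{lemma:y>exp} produces $\tau=\frac{1}{2(\beta-1)}+\frac14>\frac14>2^{-\beta}$. Under $\tau>2^{-\beta}$, your $d_n=(2/(K2^{L+n}))^\beta$ satisfies $d_n\le\tau^{L+n}$ for every $n$ once $L$ is large. The strict inequality is what absorbs the factor $(2/K)^\beta$.
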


Note that, as $n$ increases, the approximation shown in (\ref{eq:lema(ii)}) holds for more $I_j$ accordingly. Moreover, since $(0,\frac{1}{K2^{L+n}}]$ corresponds to the union of $I_j$ of every $j\in A$ such that $j<(\frac{1}{2})^{L+n}$, (\ref{eq:lema_dn}) and (\ref{eq:lema_dn2}) provides control over the probability that $Y_j$ falls within an interval where the approximation does not hold.

Consider any $\beta>3$ and fix $\tau\in(0,1/2)$, $K\in\mathbb{N}$ and $\theta\in(0,\tilde{\mu})$ satisfying Lemma \ref{lemma:y>exp}. Select $\delta>1$ sufficiently close to $1$ so that 
\begin{equation}\label{eq:delta_tau}
\alpha=\delta \tau<1/2.\end{equation}
and fix $L,S\in\mathbb{N}$ satisfying Lemma \ref{lemma:approx}.
Define \[r_n:=Se^{(\tilde{\mu}-\theta)(b_n-1)}=\begin{cases}
    S, \text{ for $n=0$}\\
    S\prod_{j=0}^{n-1}e^{(\tilde{\mu}-\theta)(L+j)} \text{, for $n\in\mathbb{N}$.}
\end{cases}\]

Given that $C_{b_n}=r\ge r_n$, we want to estimate the event in which $C_{b_{n+1}}\ge r_{n+1}$. We consider three distinct ways in which a “bad” event can occur:

\begin{itemize}
    \item simply, $C_{b_{n+1}}<r_{n+1}$ occurs;
    \item $C_j<s_n$ occurs for some $j\in{b_n+1,\ldots,b_{n+1}-1}$, in which case our approximation fails because some collective ranges fall below the baseline;
    \item $Y_j\in\left(0,\frac{1}{K2^{L+n}}\right]$ occurs for some $j\in{b_n+1,\ldots,b_{n+1}-1}$, in which case we fall into an interval where the approximation fails.
\end{itemize}

For any $n\in\mathbb{N}$, define
\[ \Lambda_{n}:=\left\{\begin{array}{ll}(\lambda_1,...,\lambda_{L+n})\in A(K)^{L+n}:& \prod_{i=1}^{L+n} a_{\lambda_j}\leq e^{(\tilde{\mu}-\theta)(L+n)}, \\
        &\vspace{-0.2cm}\\
        & \lambda_j\geq \frac{1}{2^{L+n}} \text{ for all } j\in\{1,...,L+n\}\\
        &\vspace{-0.2cm}\\
        &\text{and }\prod_{i=1}^j a_{\lambda_i}\geq e^{-\theta(L+n)}\text{ for all } j\in\{1,...,L+n\}
        \end{array}\right\}.\]
        
Note that if $C_{b_n}\geq r_n$, then the two following statements hold:
\begin{itemize}
    \item $C_{b_{n+1}}<r_{n+1}$ implies that $\prod_{i=b_n+1}^{b_{n+1}}Y_i<e^{(\tilde{\mu}-\theta)(b_{n+1}-b_n)}=e^{(\tilde{\mu}-\theta)(L+n)}$;
    \item $\prod_{i=b_n+1}^j Y_{i}\geq e^{-\theta(L+n)}$ implies that $C_j\geq r_ne^{-\theta(L+n)}=s_n$.
\end{itemize}

Therefore, $\Lambda_n$ represents the set in which the exponential growth of $(C_n)_{n\ge1}$ possibly fails on the $n$-th box, but the approximation given in (\ref{eq:lema(ii)}) holds for $Y_{b_n+1},...,Y_{b_{n+1}}$.

For any $k\in\{1,...,L+n\}$, define
\[
\Gamma_k:=\left\{\begin{array}{ll}(\gamma_1,...,\gamma_{k})\in A(K)^{k}:&\prod_{i=1}^ka_{\gamma_i}< e^{-\theta(L+n)}\text{, but } \\
&\vspace{-0.2cm}\\
&\gamma_j\geq \frac{1}{2^{L+n}} \text{ for all } j\in\{1,...,k\}\text{ and }\\
&\vspace{-0.2cm}\\
&\prod_{i=1}^j a_{\gamma_i}\geq e^{-\theta(L+n)}\text{ for all } j\in\{1,...,k-1\}\end{array}\right\}\]
and
\[\Omega_k:=\left\{\begin{array}{ll}(\omega_1,...,\omega_{k})\in A(K)^{k}:&\omega_k<\frac{1}{2^{L+n}} \text{, but }\\
&\vspace{-0.2cm}\\
&\omega_j\geq \frac{1}{2^{L+n}} \text{ and }\prod_{i=1}^j a_{\omega_i}\geq e^{-\theta(L+n)}\text{ for all } j\in\{1,...,k-1\}\end{array}\right\}.\]

These sets represent some problematic cases where, respectively, $C_{b_n+k}$ is possibly smaller than the baseline $s_n$; or $Y_{b_n+k}\in I_j$ with $j<(\frac{1}{2})^{L+n}$.

By considering every possible bad scenario in $\Lambda_n\cup (\cup_{k=1}^{L+n}\Gamma_k)\cup (\cup_{k=1}^{L+n}\Omega_k)$, we calculate, for $r\ge r_n$,
\begin{equation}\label{eq:3parts}
\begin{aligned}
P(C_{b_{n+1}}<r_{n+1}|C_{b_{n}}= r)&\leq \sum_{(\lambda_1,...,\lambda_{L+n})\in\Lambda_{n}} P\bigg(\bigcap_{k=b_n+1}^{b_n+L+n}\{Y_k\in I_{\lambda_k}\}\bigg|C_{b_n}=r\bigg)\\
&+\sum_{k=1}^{L+n}\sum_{(\gamma_1,...,\gamma_k)\in\Gamma_k} P\bigg(\bigcap_{j=b_n+1}^{b_n+k}\{Y_j\in I_{\gamma_j}\}\bigg|C_{b_n}= r\bigg)\\
&+\sum_{k=1}^{L+n}\sum_{(\omega_1,...,\omega_k)\in\Omega_k} P\bigg(\bigcap_{j=b_n+1}^{b_n+k}\{Y_j\in I_{\omega_j}\}\bigg|C_{b_n}=r\bigg)
\end{aligned}\end{equation}

We now deal with the three terms above individually, starting from the upper one. Using the chain rule property (i.e. $P(A_1\cap...\cap A_k)=P(A_1)\prod_{j=2}^k P(A_j|\cap_{i=1}^{j-1}A_i)$ for any sequence of events $A_1,...,A_k$) and the fact that $C_j\geq s_n$ for every $j\in\{b_n+1,...,b_n+L+n\}$ when summing over $\Lambda_n$ and considering the conditional $C_{b_n}=r\ge r_n$, we have by (\ref{eq:lema(ii)}) and (\ref{eq:lemma_y>exp_1}) that

\[
\begin{aligned}
\sum_{(\lambda_1,...,\lambda_{L+n})\in\Lambda_{n}} P\bigg(\bigcap_{k=b_n+1}^{b_n+L+n}\{Y_k\in I_{\lambda_k}\}\bigg|C_{b_n}= r\bigg)&\leq \delta^{L+n}\sum_{(\lambda_1,...,\lambda_{L+n})\in\Lambda_{n}} \prod_{j=1}^{L+n} P(\tilde{Y}_j=a_{\lambda_j})\\
&\leq \delta^{L+n}P\bigg(\prod_{j=1}^{L+n}\tilde{Y}_j\leq e^{(\tilde{\mu}-\theta)(L+n)}\bigg)\\
&\leq (\delta\tau)^{L+n}.
\end{aligned}\]

We use (\ref{eq:lemma_y>exp_2}) and the same reasoning on the second term to obtain
\[
\begin{aligned}
\sum_{k=1}^{L+n}\sum_{(\gamma_1,...,\gamma_k)\in\Gamma_k} P\bigg(\bigcap_{j=b_n+1}^{b_n+k}\{Y_j\in I_{\gamma_j}\} &\bigg|C_{b_n}= r\bigg) \\ &\leq \sum_{k=1}^{L+n} \delta^k \sum_{(\gamma_1,...,\gamma_k)\in\Gamma_k} \prod_{j=1}^{k} P(\tilde{Y}_j=a_{\gamma_j})\\
&\leq \delta^{L+n}\sum_{k=1}^{L+n} \sum_{(\gamma_1,...,\gamma_k)\in\Gamma_k} \prod_{j=1}^{k} P(\tilde{Y}_j=a_{\gamma_j})\\
&\leq \delta^{L+n} P\bigg(\prod_{i=1}^j \tilde{Y}_i\leq e^{-\theta (L+n)} \text{ for some }j\in\{1,...,L+n\}\bigg)\\
&\leq (\delta\tau)^{L+n}.
\end{aligned}\]

For the third term, we use similar reasoning. However, unlike the second case, there is a problem with the approximation exactly for the $k$-th element, which is solved by using (\ref{eq:lema_dn2}). We also use (\ref{eq:lema_dn}) to obtain

\[
\begin{aligned}
\sum_{k=1}^{L+n}\sum_{(\omega_1,...,\omega_k)\in\Omega_k} P\bigg(\bigcap_{j=1}^{k}\{Y_j\in I_{\omega_j}\}\Big| C_{b_n}= r\bigg) &\leq d_n\sum_{k=1}^{L+n} \sum_{(\omega_1,...,\omega_{k})\in\Omega_k} P\bigg(\bigcap_{j=1}^{k-1}\{Y_j\in I_{\omega_j}\}\Big|C_{b_n}= r\bigg)\\
&\leq d_n\\
&\leq \tau^{L+n}.
\end{aligned}\] 

By controlling each one of the three terms in the sum in (\ref{eq:3parts}), we have for each $r\ge r_n$,
\[P(C_{b_{n+1}}<r_{n+1}|C_{b_{n}}= r)\le 3(\delta\tau)^{L+n}\le 3\alpha^{L+n},\]
and the proof is complete.
\end{proof}

\subsection{Proof of auxiliary lemmas}\label{subsec:lemmas}

\begin{proof}[\textbf{Proof of Lemma \ref{lemma:y>exp}}]

Since Lemma \ref{lemma:y>exp} regards just the existence of a $K\in\mathbb{N}$, we can focus only on cases where $K=2^j$ for some $j\in\mathbb{N}$. This is particularly useful because $\tilde{Y}(1)\leq \tilde{Y}(2^j)$ for every $j\in\mathbb{N}$. 

Note that $|\log(\tilde{Y}(1))|\leq \frac{1}{\tilde{Y}(1)}$ as $\tilde{Y}(1)\leq 1$. We also have for any $j\in\mathbb{N}$ that $[\log(\tilde{Y}(2^j))]^+\leq [\log(Y)]^+$
 and $[\log(\tilde{Y}(2^j))]^-\leq [\log(\tilde{Y}(1))]^-$, where $g^+(x)=\max\{g(x),0\}$ and $g^-(x)=\max\{-g(x),0\}$ represent the positive and negative parts, respectively, of a function $g$.
Thus, we conclude for any $j\in\mathbb{N}$ that 
\begin{equation}\label{eq:|log|}
|\log(\tilde{Y}(2^j))|\leq [\log(\tilde{Y}(1))]^-+ [\log(Y)]^+, \text{ with }E\{[\log(\tilde{Y}(1))]^-+ [\log(Y)]^+\}<\infty.
\end{equation}
 
For any $K$ of the form $K=2^j$ for some $j\in\mathbb{N}$, let $\tilde{\mu}=\tilde{\mu}_K:=E(\log(\tilde{Y}))\notin\{-\infty,+\infty\}$. Define $S_0:=0$ and, for $n\in\mathbb{N}$, $\mathcal{F}_n:=\sigma(\tilde{Y}_{1},...,\tilde{Y}_n)$ and
\[S_n:=\sum_{i=1}^n\big[\tilde{\mu}-\log(\tilde{Y}_i)\big].\] 

Note that $(S_n)_{n\ge0}$ is a martingale with respect to the filtration $\{\mathcal{F}_n\}_{n\ge1}$. Moreover, a convex and integrable function over a martingale determines a submartingale with respect to the same filtration \cite[Theorem 4.2.6]{pte}. We shall use the fact that $(e^{S_n})_{n\ge0}$ is a submartingale with respect to $\{\mathcal{F}_n\}_{n\ge1}$ and, by the convexity of the exponential function, it only remains to prove that it is integrable. Indeed,
\[E(e^{S_n})=\Big[e^{\tilde{\mu}}E\big(\tilde{Y}^{-1}\big)\Big]^n\]
with $0\le \tilde{Y}^{-1}(2^j)\le\tilde{Y}^{-1}(1)$ and, since $1-\beta<0$, we have
    \begin{equation}\label{eq:e_1/y}
\begin{aligned}
E\left(\tilde{Y}(1)^{-1}\right)&=P(Y>1)+\sum_{j=1}^\infty 2^{j} P\bigg(Y\in\Big(\frac{1}{2^{j}},\frac{1}{2^{j-1}}\Big]\bigg)\\
&\leq 1+\sum_{j=1}^\infty 2^{j}\Big(\frac{1}{2^{j-1}}\Big)^\beta\\
&\leq 1+2^\beta \sum_{j=1}^\infty 2^{(1-\beta)j}<\infty.
\end{aligned}\end{equation}

By Doob's Inequality for submartingales (see \cite[Theorem 4.4.2]{pte}), we have that 
\begin{equation}\label{eq:doob}
P\Big(\max_{1\leq j \leq n}S_j\geq \theta n\Big)=P\Big(\max_{1\leq j \leq n}e^{S_j}\geq e^{\theta n}\Big)\leq \frac{E(e^{S_n})}{e^{\theta n}}=\Big[e^{(\tilde{\mu}-\theta)}E\big(\tilde{Y}^{-1}\big)\Big]^n.\end{equation}

Recall that $\tilde{Y}(K)$ converges almost surely to $Y$ as $K\to \infty$ and that $\tilde{Y}(1)\leq \tilde{Y}(2^j)$ for every $j\in\mathbb{N}$. Thus, by (\ref{eq:e_1/y}), the Dominated Convergence Theorem, and Lemma \ref{lem:beta_prime} to conclude that
\begin{equation}\label{eq:lim_mt}
\lim_{j\to\infty}E\Big(\big(\tilde{Y}(2^j)\big)^{-1}\Big)=E\big(Y^{-1}\big)<\frac12.    
\end{equation}

Define \[\tau:=
     \frac{E(Y^{-1})+1/2}{2}\in (E(Y^{-1}),1/2),\]
    as the midpoint between $E(Y^{-1})$ and $1/2$. Moreover, we rely on (\ref{eq:|log|}) to apply the Dominated Convergence Theorem again and show that
\begin{equation}\label{eq:lim_mu}
\lim_{j\to\infty}\tilde{\mu}_{2^j}=\mu\in(0,\infty). 
\end{equation}
Therefore, by (\ref{eq:lim_mt}) and (\ref{eq:lim_mu}) there exists $K\in\mathbb{N}$ satisfying, at the same time, both of the following statements:
\[E\Big(\big(\tilde{Y}(K)\big)^{-1}\Big)
        <\tau\]
        and
        \[\tilde{\mu}_K>0.\]

Since $\tilde{\mu}_K>0$ is already fixed, we select $\theta\in(0,\tilde{\mu})$ by making $e^{(\tilde{\mu}-\theta)}$ sufficiently close to $1$ so that, by (\ref{eq:doob}),
\begin{equation}\label{eq:max_sj}
P\Big(\max_{1\leq j \leq n}S_j\geq \theta n\Big)<\tau^n .   \end{equation}

It just remains to argue that both statements of the lemma come from (\ref{eq:max_sj}). Note that
\begin{equation}\label{eq:set1}
\begin{aligned}
\bigcup_{j \in\{1,...,n\}} \bigg\{\prod_{i=1}^j \tilde{Y}_i\leq e^{-\theta n}\bigg\} &\subset  \bigcup_{j \in\{1,...,n\}} \bigg\{\prod_{i=1}^j \frac{\tilde{Y}_i}{e^{\tilde{\mu}}}\leq e^{-\theta n}\bigg\}\\
&=  \bigcup_{j \in\{1,...,n\}} \{S_j\geq \theta n\}\\
&= \Big\{\max_{1\leq j \leq n} S_j\geq \theta n\Big\}.
\end{aligned}
\end{equation}
Analogously,
\begin{equation}\label{eq:set2}
\begin{aligned}
\bigg\{\prod_{i=1}^n \tilde{Y}_i\leq e^{(\tilde{\mu}-\theta) n}\bigg\} &\subset  \bigcup_{j \in\{1,...,n\}} \bigg\{\prod_{i=1}^j \frac{\tilde{Y}_i}{e^{\tilde{\mu}}}\leq e^{-\theta n}\bigg\}= \Big\{\max_{1\leq j \leq n} S_j\geq \theta n\Big\}.
\end{aligned}
\end{equation}

The proof of the first statement comes from the combination of (\ref{eq:max_sj}) and (\ref{eq:set1}), while the second one comes from the combination of (\ref{eq:max_sj}) and (\ref{eq:set2}).
\end{proof}

\begin{proof}[\textbf{Proof of Lemma \ref{lemma:approx}}]

Before proving items (I) and (II), we have to make some previous calculations and to define $L$ and $S$. Recall that $Y_{n+1}|C_n=s$ converges weakly to $Y$ as $s$ goes to infinity. As $\delta>1$, every $h\in\mathbb{N}$ admits $s^*(h)\in\mathbb{N}$ such that
\begin{equation}\label{eq:cond_y}
P(Y_2\in I_j|C_1=s)\leq\delta P(\tilde{Y}=a_j), \text{ if $s\geq s^*(h)$ and $j\in\{1/2^x:x\in\{1,2,...,h\}\}\cup\{1,2,...,K^2$}\},
\end{equation}
since this only requires asymptotic approximations of the cumulative function $P(Y\leq a_j)$ for finitely many $j$, i.e., those in $\{1/2^x:x\in\{1,2,...,h\}\}\cup\{1,2,...,K^2\}$. This argument however, does not apply to show that (\ref{eq:cond_y}) holds for any $j\in A(K)$, since $A(K)$ has infinitely many elements. The idea is to use some bounds on (\ref{eq:cumul_yn}) to control the convergence speed and the values for \(j\) that make (\ref{eq:cond_y}) to hold as $s$ increases.

Let $H_n:=\sum_{k=1}^n \frac{1}{k}$
be the $n$-th harmonic number. There exists a $\gamma$ (also known as the Euler-Mascheroni constant) such that the following inequalities hold for every $n\in\mathbb{N}$:
    \[\frac{1}{2(n+1)}<H_n-\log(n)-\gamma<\frac{1}{2n}\]
    (see, for instance, \cite{harmonic}).
Using the previous inequalities, one can also found upper and lower bounds for the difference between two harmonic numbers. For every $s,x\in\mathbb{N}$ with $x\geq 2$,
\begin{equation}\label{eq:bounds_hn}
\log\Big(\frac{x+s-1}{x-1}\Big)-\frac{1}{2x-2}\leq\sum_{j=0}^{s-1}\frac{1}{x+j}\leq \log\Big(\frac{x+s-1}{x-1}\Big).
\end{equation}

We now apply these bounds on (\ref{eq:cumul_yn}). When $t\geq 1/s$ and $f(s,t,T)=0$, we have by the upper bound in (\ref{eq:bounds_hn}) that
\begin{equation}\label{eq:lower_y}
\begin{aligned}
    P(Y_{n+1}\leq t|C_n=s)&= \exp\bigg\{-\beta\sum_{j=0}^{s-1}\frac{1}{\lfloor ts\rfloor +1+j}\bigg\}\\
    &\geq \exp\bigg\{-\beta \log\bigg(\frac{\lfloor ts\rfloor+s}{\lfloor ts\rfloor}\bigg)\bigg\}\\
    &= \bigg(\frac{\lfloor ts\rfloor}{\lfloor ts\rfloor+s}\bigg)^{\beta}.
\end{aligned}\end{equation}

On the other hand, if $t\geq 1/s$ and $f(s,t,T)=0$, the following expression comes with the lower bound in (\ref{eq:bounds_hn}):
\begin{equation}\label{eq:upper_y}\begin{aligned}
    P(Y_{n+1}\leq t|C_n=s)&= \exp\bigg\{-\beta\sum_{j=0}^{s-1}\frac{1}{\lfloor ts\rfloor +1+j}\bigg\}\\
    &\leq \exp\bigg\{-\beta \bigg[\log\bigg(\frac{\lfloor ts\rfloor+s}{\lfloor ts\rfloor}\bigg)-\frac{1}{2\lfloor ts\rfloor}\bigg]\bigg\}\\
    &\leq  \Big(\frac{t}{t+1}\Big)^{\beta}\exp\bigg\{{\frac{\beta}{2\lfloor ts\rfloor}\bigg\}}.
\end{aligned}\end{equation}

Since $\delta>1$, we have that $\delta+\frac{1-\delta}{2^\beta}>1$. Select $\zeta>0$ such that

\[c:=\delta-\delta\Big(\frac{1}{2}+\zeta\Big)^\beta+\Big(\frac{1}{2}-\zeta\Big)^\beta>1.\]

Let $(d^*_n)_{n\ge1}$ be a sequence defined by 
\[d^*_n:=\frac{c}{K^\beta2^{\beta n}}.\]

Since $2^\beta\tau>1$, we have that $d^*_n/\tau^n\to 0$ as $n$ approaches infinity. Therefore, we can select $L$ large enough so that each of the following statements hold:
\begin{equation}\label{eq:>-1}
d^*_{L+n}\leq\tau^{L+n}, \text{ for every $n\in\mathbb{N}$},
\end{equation}
\begin{equation}\label{eq:1/2+zeta}
\frac{K+1/2^{L-1}}{2K+1/2^{L-1}}\le \frac{1}{2}+\zeta,
\end{equation}
and
\begin{equation}\label{eq:L>2}
e^{(\tilde{\mu}-\theta)L-\theta}>4.
\end{equation}

We now take $S\in\mathbb{N}$ to be large enough such that each of the following statements holds: 
\[S>e^{\theta L}s^*(L) \text{ with $s^*(L)$ defined in (\ref{eq:cond_y})},\]
\begin{equation}\label{eq:<c_Beta}
\exp\bigg\{\frac{\beta}{2\lfloor Se^{-\theta L}/K2^{L+1}\rfloor}\bigg\}\leq c,\end{equation}
\begin{equation}\label{eq:tau_l_s}
\frac{K2^{L+2}+K^2 4^{L+1}}{2S e^{-\theta L}}<\zeta,
\end{equation}
and
\begin{equation}\label{eq:f(s,t,T)}
\lfloor S e^{-\theta L}/K2^{L+1} \rfloor+1\geq T.\end{equation}

(I)
Define the sequence $(d_n)_{n\ge1}$ by setting
\[d_n:=d^*_{L+n}.\]
The proof of (\ref{eq:lema_dn}) comes directly by the definition of $d_n$ and (\ref{eq:>-1}). 

Our next step is to argue that
\begin{equation}\label{eq:<2_n}
\exp\bigg\{\frac{\beta}{2\lfloor s_n/K2^{L+n}\rfloor}\bigg\}\leq c, \text{ for all } n\in\mathbb{N}.\end{equation}
In order to prove this, we just recall that $s_1=S e^{-\theta L}$ and so (\ref{eq:<2_n}) holds for $n=1$ directly by (\ref{eq:<c_Beta}). Moreover, by (\ref{eq:L>2}), we have that $s_{n+1}=s_ne^{(\tilde{\mu}-\theta) (L+n-1)-\theta}>4s_{n}$. So, a simple induction argument proves (\ref{eq:<2_n}) for every $n\in\mathbb{N}$.

By (\ref{eq:tau_l_s}), (\ref{eq:f(s,t,T)}), and similar arguments, we also have that
\begin{equation}\label{eq:tau_l_s_n}
\frac{K2^{L+1+n}+K^2 4^{L+n}}{2s_n}<\zeta, \text{ for all } n\in\mathbb{N};
\end{equation}
\begin{equation}\label{eq:f(s,t,T)n}
f\Big(s_,\frac{1}{K2^{L+n}},T\Big)=\max\{0, T-1-\lfloor s/K2^{L+n} \rfloor\}=0, \text{ for all  $n\in\mathbb{N}$ and $s\ge s_n$}.
\end{equation}

By (\ref{eq:upper_y}), (\ref{eq:<2_n}) and (\ref{eq:f(s,t,T)n}), we conclude for any $s,n\in \mathbb{N}$ with $s\geq s_n$ that
\[P\Big(Y_2\leq \frac{1}{K2^{L+n}}\Big|C_1=s\Big)\leq\Big(\frac{1}{K2^{L+n}+1}\Big)^\beta \exp\bigg\{\frac{\beta}{2\lfloor s/K2^{L+n}\rfloor}\bigg\} \leq d^*_{L+n}=d_n.\]

(II)
Fix $s,n\in\mathbb{N}$ and $j\in \{1/2^k: k\in\mathbb{N}\}$ such that $s\geq s_n$ and $1/2^{L+n}\leq j\leq 1/2^{L}$. Note that (\ref{eq:1/2+zeta}) implies 
\[\bigg(\frac{2j+K}{2j+2K}\bigg)^\beta\leq \bigg(\frac{1}{2}+\zeta\bigg)^\beta\]
and (\ref{eq:tau_l_s_n}) implies
\[\begin{aligned}
\bigg(\frac{2j+K}{2j}\bigg)^\beta\bigg(\frac{\lfloor js/K \rfloor}{\lfloor js/K \rfloor+s}\bigg)^\beta
&\geq \bigg(\frac{2j+K}{2j}\bigg)^\beta\bigg(\frac{ js/K -1}{js/K+s}\bigg)^\beta\\
&\ge \bigg(\frac{2j+K}{2j+2K}-\frac{2jK+K^2}{2j^2s}\bigg)^\beta\\
&\ge \bigg(\frac{1}{2}-\zeta\bigg)^\beta.
\end{aligned}\]

Therefore, by (\ref{eq:<2_n}),
\[
\exp\bigg\{\frac{\beta}{2\lfloor 2js/K\rfloor}\bigg\}\leq c=\delta-\delta\Big(\frac{1}{2}+\zeta\Big)^\beta+\Big(\frac{1}{2}-\zeta\Big)^\beta\leq \delta-\delta\bigg(\frac{2j+K}{2j+2K}\bigg)^\beta+\bigg(\frac{2j+K}{2j}\bigg)^\beta\bigg(\frac{\lfloor js/K \rfloor}{\lfloor js/K \rfloor+s}\bigg)^\beta.\]
Then, by (\ref{eq:lower_y}), (\ref{eq:upper_y}) and (\ref{eq:f(s,t,T)n}),
\[\begin{aligned}
P(Y_{2}\in I_j|C_1=s)&=P\Big(Y_2\leq \frac{2j}{K}\Big|C_1=s\Big)-P\Big(Y_2\leq \frac{j}{K}\Big|C_1=s\Big)\\
&\leq \Big(\frac{2j}{2j+K}\Big)^{\beta}\bigg[\delta-\delta\Big(\frac{2j+K}{2j+2K}\Big)^\beta+\Big(\frac{2j+K}{2j}\Big)^\beta\Big(\frac{\lfloor js/K \rfloor}{\lfloor js/K \rfloor+s}\Big)^\beta\bigg]-\Big(\frac{\lfloor js/K \rfloor}{\lfloor js/K \rfloor +s}\Big)^{\beta}\\
&=\delta\Big(\frac{2j}{2j+K}\Big)^{\beta} - \delta\Big(\frac{j}{j+K}\Big)^{\beta}\\
&=\delta P(\tilde{Y}=a_j).
\end{aligned}\]

This concludes the proof for the cases where $s,n\in\mathbb{N}$, $j\in A(K)$ with $s\geq s_n$ and ${1}/{2^{L+n}}\leq j\leq {1}/{2^L}$. Note that the remaining cases are also covered by (\ref{eq:cond_y}) and the fact that we defined $S$ in a way such that $s_n\geq s^*(L)$ for all $n\in\mathbb{N}$.
\end{proof}

\begin{appendix}

\section{Appendix: Beta prime}

\begin{lemma}\label{lem:beta_prime}
    For every $\beta>3$,
    \[E(Y^{-1})<1/2.\] 
\end{lemma}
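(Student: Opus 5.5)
The plan is to compute $E(Y^{-1})$ exactly rather than estimate it. The closed form will be $1/(\beta-1)$, which is strictly less than $1/2$ exactly when $\beta>3$. This also explains why the threshold $3$ appears in Proposition~\ref{lemma:exp} and in Item~2 of Theorem~\ref{theo:random_m}.

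\textbf{Step 1: the tail of $Z:=Y^{-1}$.} By \eqref{eq:cumul_beta_prime}, $Y>0$ almost surely, so $Z=1/Y$ is a well-defined nonnegative random variable. For every $u>0$,
\[
P(Z> u)=P\big(Y< 1/u\big)=\Big(\frac{1/u}{1/u+1}\Big)^\beta=\frac{1}{(1+u)^\beta}.
\]
Here I use that the distribution function in \eqref{eq:cumul_beta_prime} is continuous, so $P(Y<1/u)=P(Y\le 1/u)$.

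\textbf{Step 2: integrate the tail.} Since $Z\ge 0$, the tail formula for the expectation gives
\[
E(Y^{-1})=E(Z)=\int_0^\infty P(Z>u)\,du=\int_0^\infty (1+u)^{-\beta}\,du=\frac{1}{\beta-1}.
\]
The integral is finite because $\beta>1$.

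\textbf{Step 3: conclude.} For $\beta>3$ we have $\beta-1>2$, hence $E(Y^{-1})=1/(\beta-1)<1/2$. As an alternative check, one can write the density $\beta t^{\beta-1}(1+t)^{-\beta-1}$ and evaluate $E(Y^{-1})$ as $\beta\,B(\beta-1,2)=1/(\beta-1)$ with the Beta function, which gives the same value.

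There is no real obstacle here. The only point needing care is justifying the tail-integral identity for the nonnegative variable $Z$ and the continuity of the distribution function of $Y$; both are standard.
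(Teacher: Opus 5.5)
Your proof is correct and reaches the same closed form $E(Y^{-1})=1/(\beta-1)$ as the paper, but the integral is evaluated differently.

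The paper differentiates \eqref{eq:cumul_beta_prime} to get the density $\beta y^{\beta-1}(1+y)^{-\beta-1}$. It then substitutes $y=u/(1-u)$ to obtain $\beta B(\beta-1,2)$, and finishes with $B(a,b)=\Gamma(a)\Gamma(b)/\Gamma(a+b)$ and $\Gamma(a+1)=a\Gamma(a)$.

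You instead read off the survival function $P(Y^{-1}>u)=(1+u)^{-\beta}$ directly from the distribution function, using its continuity. You then apply the tail formula for a nonnegative random variable. This turns the computation into an elementary integral and avoids Beta and Gamma identities altogether.

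Your route is shorter and self-contained. The paper's route is the standard Beta-function moment computation for the Beta prime law, which is the one you give as your cross-check.
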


\begin{proof}[Proof of Lemma \ref{lem:beta_prime}]

For $a,b>0$, consider the Beta function
\[B(a,b):=\int_0^1 t^{a-1}(1-t)^{b-1} dt.\]
and the Gamma function
\[\Gamma(a):=\int_0^\infty e^{-t} t^{a-1}dt.\]

It is well known that for $a,b>0$,
\begin{equation}\label{eq:beta_gamma}
    B(a,b)=\frac{\Gamma(a)\,\Gamma(b)}{\Gamma(a+b)},
    \end{equation}
    and that for $a>0$,
    \begin{equation}\label{eq:recur_gamma}
    \Gamma(a+1)=a\,\Gamma(a).
    \end{equation}

Since $Y$ follows a Beta Prime($\beta,1$) distribution with cumulative function given by (\ref{eq:cumul_beta_prime}), its density is given by
\[f_Y(y)=\beta \frac{y^{\beta-1}}{(1+y)^{\beta+1}}\mathbbm{1}_{(y>0)}.\]
Therefore, we compute the integral $E(Y^{-1})$ by substituting $y=\frac{u}{1-u}$ and $dy=(1-u)^{-2}du$, obtaining
\begin{equation}\label{eq:y^t}
E(Y^{-1})=\beta\int_0^\infty \frac{y^{\beta-2}}{(1+y)^{\beta+1}}dy=\beta\int_{0}^1 u^{\beta-2}(1-u)du=\beta B(\beta-1,2)\end{equation}

Since $\Gamma(2)=\Gamma(1)=1$, we have by \eqref{eq:beta_gamma} and \eqref{eq:y^t} that 
\[E(Y^{-1})=\frac{\beta\,\Gamma(\beta-1)\,\Gamma(2)}{\Gamma(\beta+1)}=\frac{1}{\beta-1},\]
where both $\Gamma(\beta+1)=\beta(\beta-1)\Gamma(\beta-1)$ and $\Gamma(2)=\Gamma(1)=1$ follow from \eqref{eq:recur_gamma}.
Therefore, $E(Y^{-1})<1/2$ whenever $\beta>3$.
\end{proof}

\end{appendix}

\section*{Acknowledgements}
G.O.C was partially supported by Conselho Nacional de Desenvolvimento Científico e Tecnológico (CNPq), grant 150251/2026-2. P.A.G. was partially supported by FAPESP, grant 2023/13453-5, grant 2025/27064-6 and by CNPq, grant \textit{Universal} 403423/2023-6. This study was financed in part by the Coordenação de Aperfeiçoamento de Pessoal de Nível Superior - Brasil (CAPES) - Finance Code 001.

%\addbibresource{sample.bib}
%\bibliographystyle{apalike}
%\bibliography{sample}

\printbibliography

\end{document}